\title{Edge ideals and DG algebra resolutions}
\subjclass[2010]{Primary: 16E45; Secondary: 16S37, 13D02, 05C38.}
\keywords{  DG algebra resolution, Koszul homology, acyclic closure, minimal model, deviations, Poincar\'e series, Hilbert series, Koszul algebra, edge ideal, paths and cycles.}
\author[Adam Boocher]{Adam Boocher}
\address{Adam Boocher \\ School of Mathematics \\ University of Edinburgh \\James Clerk Maxwell Building, Mayfield Road \\Edinburgh EH9 3JZ, Scotland}
\email{adam.boocher@ed.ac.uk}
 \author[Alessio D'Al\`{i}]{Alessio D'Al\`{i}}
\address{Alessio D'Al\`{i} \\ Dipartimento di Matematica \\ Universit\`{a} degli Studi di Genova \\Via Dodecaneso 35 \\16146 Genova, Italy}
\email{dali@dima.unige.it}
 \author[Elo\'isa Grifo]{Elo\'isa Grifo}
\address{Elo\'isa Grifo \\Department of Mathematics \\  University of Virginia \\141 Cabell Drive, Kerchof Hall \\Charlottesville, VA 22904, USA}
\email{er2eq@virginia.edu}
\author[Jonathan Monta\~{n}o]{Jonathan Monta\~{n}o}
\address{Jonathan Monta\~{n}o \\ Department of Mathematics \\ Purdue University \\150 North University Street \\West Lafayette, IN 47907, USA}
\email{jmontano@purdue.edu}
\author[Alessio Sammartano]{Alessio Sammartano}
\address{Alessio Sammartano \\ Department of Mathematics \\ Purdue University \\150 North University Street \\West Lafayette, IN 47907, USA}
\email{asammart@purdue.edu}
\newtheorem{thm}{\bf Theorem}[section]
\newtheorem{prop}[thm]{\bf Proposition}
\newtheorem{lemma}[thm]{\bf Lemma}
\theoremstyle{definition}
\newtheorem{definition}[thm]{\bf Definition}
\theoremstyle{remark}
\newtheorem{remark}[thm]{\bf Remark}
\newtheorem{question}[thm]{\bf Question}
\newtheorem{example}[thm]{\bf Example}
\numberwithin{equation}{section}
\newcommand{\be}[2]{\beta_{#1,#2}}
\def\Supp{\operatorname{Supp\, }}
\def\Tor{\operatorname{Tor}}
\def \Im{{\operatorname{Im\,}}}
\def \Ho{{\operatorname{H \,}}}
\def \Card{{\operatorname{Card}}}
\def \id{{\operatorname{id}}}
\DeclareMathOperator{\HS}{HS}
\DeclareMathOperator{\mdeg}{mdeg}
\newcommand{\ve}{\varepsilon}
\def \fv{\mathbf v}
\def \fw{\mathbf w}
\def \fu{\mathbf u}
\def \f1{\mathbf{1}}
\def\xi{x}
\def\fxi{\mathbf{\xi}}
\def\zi{z}
\def\wx{\widetilde{x}}
\def\ls{\leqslant}
\def\gs{\geqslant}
\def\ee{\varepsilon}
\def\fn{\mathfrak{n}}
\def\fa{\mathfrak{a}}
\def \RR{\mathbb R}
\def \NN{\mathbb N}
\def \ZZ{\mathbb Z}
\def \FF{\mathbb F}
\def \C{\mathcal C}
\def \P{\mathcal P}
\newcommand\pat[1]{I(\mathcal{P}_{#1})}
\newcommand\cyc[1]{I(\mathcal{C}_{#1})}
\begin{document}

\maketitle

\begin{abstract}
Let $R=S/I$  where $S=k[T_1, \ldots, T_n]$ and $I$ is a homogeneous ideal in $S$.
The acyclic closure $R\langle Y \rangle$ of $k$ over $R$ is a  DG algebra resolution obtained by means of Tate's process of adjoining variables to kill cycles.
In a similar way one can obtain  the minimal model $S[X]$, a DG algebra resolution of $R$ over $S$.
By a theorem of Avramov  there is a tight connection between these two resolutions.
In this paper we study these two resolutions when $I$ is the edge ideal of a path or a cycle.
We determine the behavior of the deviations  $\varepsilon_i(R)$, which are the number of variables in $R\langle Y \rangle$ in  homological degree $i$.
We apply our results to the study of the $k$-algebra structure of the Koszul homology of $R$.
\end{abstract}


\section{Introduction}

Let $S= k[T_1, \ldots, T_n]$, $I\subseteq (T_1,\ldots, T_n)^2$ be a homogeneous ideal and  $R  = S/I$.
Endowing free resolutions over $R$ with  multiplicative structures can be  a powerful technique  in studying homological properties of the ring.
The idea of multiplicative free resolution  is made precise by the notion of a {\bf Differential Graded (DG) algebra resolution} (cf. \cite[Ch. 31]{PeevaGradedSyzygies}). 
Several interesting resolutions admit a DG algebra structure: examples include the Koszul complex, the Taylor resolution of monomial ideals, 
the Eliahou-Kervaire resolution (cf. \cite{0Borel}),
the minimal free resolution of $k$ (cf. \cite{Gulliksen}, \cite{Schoeller}),
and
free resolutions of length at most 3 (cf. \cite{BuchEis}).
In general, though, minimality and DG algebra structure are incompatible conditions on  resolutions of an $R$-algebra:
obstructions were discovered and used in \cite{AvramovObstructions} to produce perfect ideals $\fa\subseteq R$ with prescribed grade $\gs 4$ 
such that the minimal free $R$-resolution of  $R/\fa$ admits no DG algebra structure.
 
Nevertheless, it is  always possible to obtain  DG algebra resolutions of a factor ring $R/\fa$
by a recursive process that mimics the construction of the minimal free resolution of a module;
 we refer to \cite{Avramov6Lectures} for more details and background.
Let $\{a_1, \ldots, a_r\}$ be a minimal generating set of $\fa$ and start with the Koszul complex on $a_1, \ldots, a_r$. 
Apply inductively Tate's process  of adjoining variables  in homological degree $i+1$ to kill cycles in homological degree $i$ whose classes generate the $i$-th homology minimally (cf. \cite{Tate}).
Using exterior variables to kill cycles of even degrees and polynomial variables to kill cycles of odd degrees we obtain a DG algebra resolution of $R$, called a {\bf minimal model}  of $R/\fa$ over $R$
and denoted by $R[X]$, where $X$ is the collection of all the variables adjoined during the process (cf. \cite[7.2]{Avramov6Lectures}).
Using divided power variables instead of polynomial variables we obtain another DG algebra resolution of $R$, called an {\bf acyclic closure} of $R/\fa$ over $R$ and denoted by $R \langle Y \rangle$;
similarly, $Y$ is the collection of all the {variables} adjoined (cf. \cite[6.3]{Avramov6Lectures}).
Both objects are uniquely determined up to isomorphisms of DG algebras. 
The minimal model and the acyclic closure are isomorphic if $R$ is a complete intersection or if $\mathbb{Q}\subseteq R$, but they  differ in general.
The set  $X_i$ (resp. $Y_i$)  of variables adjoined to $R[X]$ (resp. $R\langle Y \rangle$) in homological degree $i$ has finite cardinality.

A  result of Avramov relates the minimal model $S[X]$ of  $R$ over the polynomial ring $S$ to the acyclic closure $R\langle Y \rangle$ of the residue field $k$ over $R$:
the equality $\Card(X_i)=\Card(Y_{i+1})$ holds for all $i\gs 1$ (cf. \cite[7.2.6]{Avramov6Lectures}).
We remark that such resolutions are considerably hard to describe explicitly. 
The growth of $S[X]$ and $R\langle Y \rangle$ is determined by the integers $\varepsilon_i(R) = \Card (Y_{i})$, known as the {\bf deviations} of $R$
 (because they measure how much $R$ deviates from being regular or a complete intersection, cf. \cite{AvramovCI}, \cite[Section 7.3]{Avramov6Lectures}).
The deviations are related to the  Poincar\'e series $P^R_k(z) = \sum_{i\gs 0} \dim_k \Tor^R_i(k,k)z^i$
by  the following formula  (cf. \cite[7.1.1]{Avramov6Lectures})
\begin{equation}
P^R_k(z) = {\frac{     \prod_{i \in 2\mathbb{N}+1}    (1+z^i)^{\varepsilon_i(R)}  }{ \prod_{i \in 2\mathbb{N}}    (1-z^i)^{\varepsilon_i(R)}     }    }.
\end{equation}

In this paper we study the minimal model $S[X]$ of $R$ over $S$ and the acyclic closure $R\langle Y \rangle$ of $k$ over $R$ when $R$ is a {\bf Koszul algebra},
i.e. when $k$ has a linear resolution over $R$.
It is well known that for Koszul algebras $R$ the Poincar\'e series is related to the Hilbert series by the equation
\begin{equation}\label{EquationHilbertPoincare}
P^R_k(z) {\HS_R(-z)} = 1.
\end{equation}
Furthermore, $R$ is Koszul if $I$ is a quadratic monomial, in particular if $I$ is the edge ideal of a graph. 
See  \cite[Ch. 34]{PeevaGradedSyzygies} and the references therein for details. 

In Section \ref{SectionEdgeIdeals} we study the deviations of $R$  when $I$ is the edge ideal of a cycle or a path.
In order to do so, we exploit the multigraded structure of $R\langle Y \rangle$.
In Theorem~\ref{TheoremExistenceSequences} we determine the  deviations $\varepsilon_i(R)$ for $i=1, \ldots, n$; these values are determined by two  sequences  $\{\alpha_s\}$ and $\{\gamma_s\}$, that are independent of the number of vertices $n$.

In Section \ref{SectionKoszulHomology} we use the minimal model $S[X]$ to investigate the  Koszul homology $H^R= \Tor^S(R,k)$ of $R$.
Its $k$-algebra structure encodes interesting homological information on $R$: 
for instance, $R$ is a complete intersection if and only if $H^R$ is an exterior algebra on $H^R_1$ (cf. \cite{Tate}),
 and $R$ is Gorenstein if and only if $H^R$ is a Poincar\'e algebra (cf. \cite{AvramovGolod}). 
When $R$ is a Golod ring $H^R$ has trivial multiplication (cf.  \cite{Golod}). 
It is not clear how the Koszul property of $R$ is reflected in the $k$-algebra structure of $H^R$. 
Results in this direction have been obtained by Avramov, Conca, and Iyengar in \cite{AvramovConcaIyengar} and \cite{AvramovConcaIyengar2}.
We extend  their theorem  \cite[5.1]{AvramovConcaIyengar2}
to show that if $R$ is Koszul then the  components of $H^R$ of bidegrees $(i, 2i-1)$ are generated in bidegrees $(1,2)$ and $(2,3)$, see Theorem \ref{KoszulSecondDiag}.
While these theorems tell us that a part of the $k$-algebra $H^R$ is generated in the linear strand if $R$ is Koszul, in general there may be minimal algebra generators in other positions (see Remark \ref{RemarkKoszulHomologyLinearStrand}). 
In fact, in Theorem \ref{KoszulHomologyPolygons} we give a complete description of the $k$-algebra generators of the Koszul homology of  the algebras considered in Section \ref{SectionEdgeIdeals}:
for edge ideals of cycles, 
the property of being generated in the linear strand depends on the residue of the number of vertices modulo 3.

\section{Deviations of edge ideals of paths and cycles}\label{SectionEdgeIdeals}

Throughout this section
we consider $S=[T_1, \ldots, T_n]$ as an $\NN^n$-graded algebra by assigning to each monomial of $S$ the multidegree
$\mdeg(T_1^{v_1}\cdots T_n^{v_n})=\fv =(v_1, \ldots, v_n)$.
If $I \subseteq S$ is a monomial ideal, 
then $R=S/I$ inherits the multigrading from $S$.  Let $t_i$ be the image of $T_i$ in $R$ and denote by $\be{i}{\mathbf{v}}^R(k) = \dim_k \Tor_i^R(k,k)_{\mathbf{v}}$ the multigraded Betti numbers of $k$ over $R$, 
and by   
$P_k^R(\zi,\fxi)=\sum_{i,\fv}\be{i}{\mathbf{v}}^R(k)\zi^i\fxi^\fv$
the multigraded Poincar\'e series of $R$, where $\fxi^\fv=x_1^{v_1}\cdots x_n^{v_n}$. 
There are uniquely determined nonnegative integers $\ee_{i,\fv}=\ee_{i,\fv}(R)$ satisfying the infinite product expansion (cf. \cite[Remark 1]{Berglund})
\begin{equation}\label{MultigradedExpansion}
P_k^R(\zi,\fxi)=\prod_{i\gs 1, \fv \in \NN^n}\frac{(1+\zi^{2i-1}\fxi^{\fv})^{\ee_{2i-1,\fv}}}{(1-\zi^{2i}\fxi^{\fv})^{\ee_{2i,\fv}}}.
\end{equation}
The numbers $\ee_{i,\fv}$ are known as the {\bf multigraded deviations} of $R$.
They refine the usual deviations in the sense that $\ee_{i} =\sum_{\fv \in \NN^n}\ee_{i,\fv}$. 
We can repeat the constructions in the Introduction
 respecting the multigrading.
In particular, 
we can construct an acyclic closure $R\langle Y \rangle$ of $k$ over $R$ and hence 
$$\ee_{i,\fv}(R)=\Card(Y_{i,\fv}),$$ 
where $Y_{i,\fv}$ denotes the set of variables in homological degree $i$ and internal multidegree $\fv$. 
Similarly, we can construct a minimal model $S[X]$ of $R$ over $S$
and denote by $X_{i,\fv}$ the variables in homological degree $i$ and internal multidegree $\fv$;
 the multigraded version of \cite[7.2.6]{Avramov6Lectures}  holds, see \cite[Lemma 5]{Berglund}. 

Let $\HS_R(\fxi)=\sum_{\fv\in\NN^n}\dim_k(R_{\fv})\fxi^{\fv}$ be the multigraded Hilbert series of $R$. 
The following fact is folklore. We include here its proof for the reader's convenience.

\begin{prop}\label{MultPoncareHilbert}
Let $S=k[T_1,\ldots, T_n]$ and $I$ be a monomial ideal of $S$. Then
$$
P^R_k(-1,\fxi)\HS_R(\fxi)=1.
$$
\end{prop}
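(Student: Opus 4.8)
The plan is to obtain the identity as the Euler characteristic of a minimal multigraded free resolution of $k$ over $R$; the only delicate point is a finiteness statement that legitimizes the manipulations with formal power series.

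First I would fix a minimal $\NN^n$-graded free resolution $F_\bullet \to k$ of $k$ over $R$, so that $F_i \cong \bigoplus_{\fv \in \NN^n} R(-\fv)^{\be{i}{\fv}^R(k)}$ and every entry of every differential lies in the graded maximal ideal $\fm=(t_1,\dots,t_n)$. An easy induction on $i$, using minimality, shows that $F_i$ is generated in total internal degrees $\gs i$; equivalently $\be{i}{\fv}^R(k)=0$ whenever $|\fv|<i$. This is exactly the input needed to guarantee that for each fixed $\fw\in\NN^n$ the number $\dim_k(F_i)_{\fw}=\sum_{\fv}\be{i}{\fv}^R(k)\dim_k R_{\fw-\fv}$ is nonzero for only finitely many pairs $(i,\fv)$ (only $\fv\ls\fw$ contribute, and for such $\fv$ only $i\ls|\fv|\ls|\fw|$), so that $P^R_k(z,\fxi)$ may be specialized at $z=-1$ to a well-defined element of $\ZZ[[\fxi]]$ and all the series below may be multiplied and rearranged freely.

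Next I would take Euler characteristics degree by degree. For each $\fw$ the strand $(F_\bullet)_{\fw}$ is a finite complex of finite-dimensional $k$-vector spaces, exact in positive homological degrees and with $0$th homology $(k)_{\fw}$; hence $\sum_{i\gs 0}(-1)^i\dim_k(F_i)_{\fw}=\dim_k(k)_{\fw}$, which is $1$ if $\fw=0$ and $0$ otherwise. Multiplying by $\fxi^{\fw}$ and summing over $\fw$ gives $\sum_{i\gs 0}(-1)^i\HS_{F_i}(\fxi)=1$. Now $\HS_{F_i}(\fxi)=\big(\sum_{\fv}\be{i}{\fv}^R(k)\fxi^{\fv}\big)\HS_R(\fxi)$, so factoring out $\HS_R(\fxi)$ turns the left-hand side into $\big(\sum_{i,\fv}(-1)^i\be{i}{\fv}^R(k)\fxi^{\fv}\big)\HS_R(\fxi)=P^R_k(-1,\fxi)\HS_R(\fxi)$, and the claim follows.

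I expect the only genuine obstacle to be the bookkeeping in the second step: one must check carefully that the degree bound $\be{i}{\fv}^R(k)=0$ for $|\fv|<i$ really does make every coefficient of $P^R_k(-1,\fxi)\HS_R(\fxi)$ a finite sum, so that passing $z\mapsto -1$ through the series and reindexing the alternating sum are valid operations. Everything else is formal. Note that the argument uses nothing about $I$ being monomial beyond the presence of the $\NN^n$-grading in which the statement is phrased; the same proof gives the analogous $\ZZ$-graded identity.
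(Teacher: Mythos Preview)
Your proof is correct and follows essentially the same approach as the paper's: both take the multigraded strand of the minimal free resolution of $k$ over $R$ and compute its Euler characteristic. The paper's proof is terser and omits the finiteness check you carry out (that $\be{i}{\fv}^R(k)=0$ for $|\fv|<i$, making each strand a finite complex), but that is indeed the point needed to make the alternating sum and the evaluation $z\mapsto -1$ meaningful.
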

\begin{proof}
Let $\FF$ be the augmented minimal free resolution of $k$ over $R$ and fix $\fv\in\NN^n$. Let $\FF_{\fv}$ be the strand of $\FF$ in multidegree $\fv$: $$\FF_{\fv}:\cdots\rightarrow F_{2,\fv}\rightarrow F_{1,\fv}\rightarrow F_{0,\fv}=R_\fv \rightarrow k_{\fv}\rightarrow 0.$$
Since $\FF_{\fv}$ is an exact complex of $k$-vector spaces, we have $\sum_{i\gs 0}(-1)^i\dim_k F_{i,\fv}=1$ if $\fv=(0,\ldots,0)$ 
and $0$ otherwise. 
On the other hand, it is easy to see that this alternating sum is equal to the coefficient of $\fxi^{\fv}$ in $P^R_k(-1,\fxi)\HS_R(\fxi)$ and the conclusion follows.
\end{proof}

Let $\mathcal{G}$ be a graph with vertices $\{1, \ldots, n\}$. The {\bf edge ideal} of $\mathcal{G}$ is the ideal $I(\mathcal{G})\subseteq S$
generated by the monomials $T_iT_j$ such that $\{i,j\}$ is an edge of $\mathcal{G}$.
We denote the {\bf $n$-path} by $\P_n$ and  the {\bf $n$-cycle} by $\C_n$,  
the graphs
whose edges are respectively 
$
\big\{\{1,2\},\,\ldots,\, \{n-1,n\}\big\}
$
and
$
\big\{\{1,2\},\,\ldots,\, \{n-1,n\}, \{n,1\}\big\}$ (see Figure \ref{pictures}).

\begin{figure}[ht]

\begin{tikzpicture}
\draw [fill] (-1,0) circle [radius=0.04];
\draw [fill] (-2,0) circle [radius=0.04];
\draw [fill] (-3,0) circle [radius=0.04];
\draw [fill] (-4,0) circle [radius=0.04];
\draw [fill] (-5,0) circle [radius=0.04];

\draw [fill] (1,0) circle [radius=0.04];
\draw [fill] (1.7,1) circle [radius=0.04];
\draw [fill] (1.7,-1) circle [radius=0.04];
\draw [fill] (3.1,-1) circle [radius=0.04];
\draw [fill] (3.1,1) circle [radius=0.04];
\draw [fill] (3.8,0) circle [radius=0.04];

\draw (-1,0)--(-5,0);
\draw  (1,0)--(1.7,1)--(3.1,1)--(3.8,0)--(3.1,-1)--(1.7,-1)--(1,0);

\node at (-1,-0.3) {5};
\node at (-2,-0.3) {4};
\node at (-3,-0.3) {3};
\node at (-4,-0.3) {2};
\node at (-5,-0.3) {1};

\node at (0.8,0) {1};
\node at (1.7,1.3) {2};
\node at (1.7,-1.3) {6};
\node at (3.1,-1.3) {5};
\node at (3.1,1.3) {3};
\node at (4,0) {4};

\end{tikzpicture}

\caption{The graphs $\P_5$ and $\C_6$.}
\label{pictures}
\end{figure}

Given a vector $\fv = (v_1, \ldots, v_n)\in \mathbb{N}^n$,
denote by $\|\fv\|=\sum_i v_i$ the {\bf 1-norm} of $\fv$.
If $R$ is a Koszul algebra, $\beta_{i,\fv}^R(k) \ne 0$ only if $i =\|\fv\|$, and thus a deviation $\ee_{i,\fv} $ is nonzero only if
 $i =\|\fv\|$; 
for this reason we  denote $\ee_{\|\fv\|,\fv}$  simply by $\ee_{\fv}$
for the rest of the section. 
The {\bf support} of a vector $\fv =(v_1, \ldots, v_n)$
is  $\Supp(\fv)=\{ i \, : \, v_i\ne 0\}$.
The set $\Supp(\fv)$ is said to be 
an {\bf interval} if it is of the form $\{a, a+1, \ldots, a+b\}$ for some $a$ and some $b\gs 0$,
while it is said to be a  
{\bf cyclic interval} if it is an interval or  a subset of the form 
$\{1, 2, \ldots, a, b, b+1, \ldots, n\}$ for some $a<b$.
These definitions are motivated by Lemma \ref{LemmaConsecutiveDeviations},
which plays an important role  in the rest of the paper as it narrows down the possible multidegrees of nonzero deviations.

\begin{example}
The support of $(1,2,1,0,0)$ is an interval.

The support of $(1,0,0,1,2)$ is a  cyclic interval but not an interval.

The support of $(1,0,2,0,1)$ is not a cyclic interval.
\end{example}

\begin{lemma}\label{LemmaConsecutiveDeviations}
Let $S=k[T_1,\ldots,T_n]$ with $n\gs 3$ and $\fv\in \NN^n$.
\begin{enumerate}
\item[(a)] If $\varepsilon_{\mathbf{v}}(S/\pat{n})>0$, then $\Supp(\fv)$ is an interval.
\item[(b)] If $\varepsilon_{\mathbf{v}}(S/\cyc{n})>0$, then $\Supp(\fv)$ is a cyclic interval. 
\end{enumerate}
\end{lemma}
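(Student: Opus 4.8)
The plan is to work with the acyclic closure $R\langle Y\rangle$ and exploit the $\NN^n$-grading, reducing both statements to a connectivity statement about the graph. The crucial observation is that a variable adjoined to kill a cycle of multidegree $\fv$ can only exist if that multidegree already appears among the products of lower-degree variables and differentials — ultimately if $\fxi^\fv$ appears with nonzero coefficient in the multigraded Poincar\'e series $P^R_k(z,\fxi)$, equivalently (via Proposition~\ref{MultPoncareHilbert} and Koszulness) if $(-1)^{\|\fv\|}$ times the coefficient of $\fxi^\fv$ in $1/\HS_R(\fxi)$ is nonzero. So I would first reduce the problem to: \emph{if the coefficient of $\fxi^\fv$ in $\HS_R(\fxi)^{-1}$ is nonzero, then $\Supp(\fv)$ is an interval (resp.\ cyclic interval).}

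\textbf{Key steps.} First, recall that the multigraded deviations $\ee_\fv$ are read off from the infinite product \eqref{MultigradedExpansion}; by a standard logarithmic-derivative argument, $\ee_\fv \ne 0$ forces $\fxi^\fv$ to appear in $P^R_k(z,\fxi)$ with a nonzero coefficient (each deviation contributes a squarefree-in-$z^i\fxi^\fv$ factor, and cancellations among distinct multidegrees cannot occur because the multigrading is preserved). Since $R$ is Koszul, Proposition~\ref{MultPoncareHilbert} gives $P^R_k(z,\fxi)$ as the multigraded inverse of $\HS_R(-z\cdot\fxi)$ (after the substitution tracking $i=\|\fv\|$), so it suffices to show that every monomial $\fxi^\fv$ occurring in the formal inverse $\HS_R(\fxi)^{-1}$ has interval (resp.\ cyclic-interval) support. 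Second, I would compute $\HS_R(\fxi)$ explicitly. For $R=S/I(\P_n)$, the standard monomials are products $\prod t_i^{v_i}$ with no two consecutive $v_i$ both positive, so
\[
\HS_{S/I(\P_n)}(\fxi)=\sum_{\substack{\fv\,:\,\Supp(\fv)\text{ has no}\\ \text{two consecutive elements}}}\ \prod_{i}\frac{\xi_i^{v_i}}{1},
\]
which satisfies a transfer-matrix / linear recursion in $n$; similarly for $\C_n$ with the cyclic independence condition. Third — the combinatorial heart — I would show the inverse series $\HS_R(\fxi)^{-1}$ is supported only on interval (resp.\ cyclic-interval) multidegrees. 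The cleanest route is a transfer-matrix factorization: write $\HS_{S/I(\P_n)}(\fxi)$ via a product of $2\times 2$ matrices $M_i = \begin{pmatrix}\frac{1}{1-\xi_i} & 1\\ \frac{1}{1-\xi_i}-1 & 1\end{pmatrix}$ (or similar), so that the Hilbert series is a single entry of $M_1\cdots M_n$; then inverting corresponds to inverting each local factor, and each local factor only involves $\xi_i$ — so the inverse is a "path-local" object whose monomials have connected support. Concretely, I expect a clean closed form like
\[
\HS_{S/I(\P_n)}(\fxi)^{-1}=\text{(sum over interval-supported }\fv\text{ of }\pm\fxi^\fv),
\]
and one verifies directly that multiplying this candidate by $\HS_R(\fxi)$ yields $1$; the cyclic case follows by a trace-type argument or by the identity relating $\C_n$ to $\P_{n-1}$ and $\P_{n-3}$.

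\textbf{Main obstacle.} The real work is the third step: proving that the formal inverse of the (rational, multivariate) Hilbert series has support confined to intervals/cyclic intervals. The deviation-to-Poincar\'e and Poincar\'e-to-Hilbert reductions are essentially bookkeeping, and the Hilbert-series computation is routine combinatorics on independent sets in paths and cycles; but controlling the \emph{support of the inverse} requires either an explicit closed formula for $1/\HS_R(\fxi)$ (which must then be guessed and verified) or a structural argument — the transfer-matrix factorization above is my preferred tool, since it localizes the computation edge-by-edge and makes the connectivity of the support manifest. A mild subtlety in the cyclic case is that "cyclic interval" includes wrap-around supports $\{1,\dots,a,b,\dots,n\}$; I expect these to arise exactly from the off-diagonal (trace) contributions when closing up the path into a cycle, so the bookkeeping must be done carefully to see that no genuinely disconnected support (like $\{1,3,5\}$) ever appears.
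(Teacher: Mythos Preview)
Your reduction has a fatal gap. You correctly observe that $\ee_\fv>0$ implies $\beta^R_{\|\fv\|,\fv}(k)>0$ (a variable of the acyclic closure is itself a basis element of the minimal resolution of $k$), and hence that the coefficient of $\fxi^\fv$ in $\HS_R(\fxi)^{-1}$ is nonzero. But you then propose to prove that \emph{every} $\fv$ with nonzero coefficient in $\HS_R(\fxi)^{-1}$ has interval support, and this statement is simply false. Take $n=3$ and $R=S/I(\P_3)=k[T_1,T_2,T_3]/(T_1T_2,T_2T_3)$. A direct expansion gives
\[
\HS_R(\fxi)^{-1}=1-(x_1+x_2+x_3)+2x_1x_2+2x_2x_3+x_1x_3+\cdots,
\]
so the coefficient of $x_1x_3$ is $1$; equivalently $\beta^R_{2,(1,0,1)}(k)=1$, coming from the monomial $y_1y_3$ in the acyclic closure. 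Yet $\Supp(1,0,1)=\{1,3\}$ is not an interval. The point is that the Betti numbers $\beta^R_{\|\fv\|,\fv}(k)$ count \emph{all} monomials in the variables $Y$ of the given bidegree, and products $y_{\fu_1}\cdots y_{\fu_r}$ with $\fu_1+\cdots+\fu_r=\fv$ contribute even when each $\Supp(\fu_j)$ is a proper sub-interval of $\Supp(\fv)$. Thus the Poincar\'e series---equivalently $\HS_R^{-1}$---is too coarse an invariant to isolate the deviations, and no transfer-matrix factorization of $\HS_R^{-1}$ can establish the claimed support restriction, because that restriction does not hold.

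The paper's argument is of a different nature: it works directly inside the acyclic closure $R\langle Y\rangle$ by induction on homological degree. If some $y\in Y_i$ had multidegree $\fv$ with disconnected support, write $\fv=\fv_1+\fv_2$ with $\Supp(\fv_1)$ and $\Supp(\fv_2)$ separated by a gap (no adjacent indices between them). The cycle $z\in R\langle Y_{\ls i-1}\rangle$ that $y$ is adjoined to kill then decomposes, via the monomial basis and the induction hypothesis on the supports of lower variables, into sums of products $p_jq_j$ with $p_j$ of multidegree $\fv_1$ and $q_j$ of multidegree $\fv_2$. Because no relation $t_lt_{l+1}=0$ links the two blocks, the tensor product of monomial bases in multidegrees $\fv_1$ and $\fv_2$ injects into the basis in multidegree $\fv$, and the Leibniz rule forces each $p_j$, $q_j$ to be a cycle of strictly smaller homological degree---hence a boundary, since $R\langle Y_{\ls i-1}\rangle$ is already acyclic below degree $i-1$. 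This makes $z$ itself a boundary, contradicting minimality. The argument is genuinely DG-algebraic and does not reduce to a statement about generating functions.
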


\begin{proof}
We only prove (a) as 
the proof of (b) is the same
with straightforward modifications. 
Let $R\langle Y\rangle $ be an acyclic closure of $k$ over $R$.
We regard the elements of $R\langle Y\rangle $ as polynomials in the variables $Y$ and ${t_1, \ldots, t_n}$.
If $V \subseteq R\langle Y \rangle$ is a graded vector subspace, 
we denote by $V_{i,\fv}$ the graded component of $V$ of homological degree $i$ and internal multidegree $\fv$.
We show by induction on $i$ that the support of the multidegree of  each  variable in $Y_i$ is an interval. 
For $i=1$, the statement is clear, since the multidegrees are just the basis vectors of $\NN^n$.
The case $i=2$ follows from \cite[Lemma 5]{Berglund}, because the multidegrees of the variables in $Y_2$ are the same as those of the generators of $\pat{n}$.

Now let $i> 2$,  $y\in Y_i$ and $\fv=\mdeg(y)$, so that $\|\fv\|=i$.
Assume by contradiction that $\Supp(\fv)$ is not an interval. Then we can write $\fv=\fv_1+\fv_2$ for two nonzero vectors $\fv_1$ and $\fv_2$
such that $\Supp(\fv_1)$ and $\Supp(\fv_2)$
are  disjoint and do not contain two adjacent indices. 
By construction of $R\langle Y\rangle$,
the variable $y$ is adjoined to kill a cycle $z$ in  $R\langle Y_{\ls i-1}\rangle$ whose homology class is part of a minimal generating set of $H_{i-1}(R\langle Y_{\ls i-1}\rangle)$.
We will derive a contradiction by showing that $z$ is a boundary in 
$R\langle Y_{\ls i-1}\rangle$.
Note that $z \in R\langle Y_{\ls i-1}\rangle_{\|\fv\|-1,\fv}$,
and by induction  the variables in $Y_{\ls i-1}$ have multidegrees whose supports are intervals, 
thus we can write  
$$
z=\sum_j \left( A_j p_j + B_j q_j \right) ,
$$ 
where   $A_j \in R\langle Y_{\ls i-1} \rangle_{\|\fv_1\|,\fv_1}$ and
$B_j \in R\langle Y_{\ls i-1} \rangle_{\|\fv_2\|,\fv_2}$
 are distinct monomials
and
$p_j \in R\langle Y_{\ls i-1} \rangle_{\|\fv_2\|-1,\fv_2}$ and
$q_j \in R\langle Y_{\ls i-1} \rangle_{\|\fv_1\|-1,\fv_1}$
 are homogeneous polynomials.
Since $z$ is a cycle,  the Leibniz rule yields
\begin{align}
\begin{split}
0&=\partial(z)\\
&=\sum \partial(A_j)p_j + (-1)^{\|\fv_1\|} \sum  A_j\partial(p_j)+ \sum \partial(B_j)q_j + (-1)^{\|\fv_2\|} \sum  B_j\partial(q_j).
\end{split}
\end{align}
In the sum above, 
each monomial $A_j$ only appears in $A_j\partial(p_j)$, 
therefore $\partial(p_j)=0$.
However, 
by construction of $R\langle Y \rangle$
the homology of the DG algebra 
$R\langle Y_{\ls i-1}\rangle$
vanishes
in the homological degree 
$\|\fv_2\|-1<\|\fv\|-1=i-1$
hence $p_j$ is a boundary in $R\langle Y_{\ls i-1}\rangle$. 
Likewise, 
$q_j$ is a boundary. 

Let $P_j, Q_j$ be homogeneous polynomials such that
$\partial(P_j)=p_j$, $\partial(Q_j)=q_j$, so that   
$$z=\sum A_j\partial(P_j)+ \sum B_j\partial(Q_j).$$
Since $\partial(B_jQ_j)=\partial(B_j)Q_j+(-1)^{\|\fv_2\|}B_j\partial(Q_j),$ we have that $z$ is a boundary if and only if the cycle 
$
\sum A_j\partial(P_j)- (-1)^{\|\fv_2\|} \sum \partial(B_j)Q_j
$
is a boundary. 
In other words, 
by grouping together the terms in the two sums,
we may assume without loss of generality that 
the original cycle has the form
\begin{equation}\label{EquationFormCycle}
z= \sum p_jq_j 
\qquad
\mbox{ with }
\quad
p_j \in R\langle Y_{\ls i-1}\rangle_{\|\fv_1\|, \fv_1}
\quad
\mbox{ and }
\quad
q_j \in \partial \left( R\langle Y_{\ls i-1}\rangle_{\|\fv_2\|, \fv_2}\right)
\end{equation}
with the $q_j$  linearly independent over $k$.

Let $\{e_h\}$ be a $k$-basis of $\partial(R\langle Y_{\ls i-1}\rangle_{\|\fv_1\|, \fv_1})$ and write $\partial(p_j) = \sum \lambda_{j,h} e_h$ with $\lambda_{j,h}\in k$. Then
\begin{equation}\label{EquationLinearDependence}
0=\partial(z)=\sum \partial(p_jq_j)=\sum \partial(p_j)q_j= \sum \lambda_{j,h} e_h q_j.
\end{equation}

The boundaries form a homogeneous two-sided ideal in the subring of cycles, and thus by Equation \ref{EquationFormCycle}
in order to show that $z$ is a boundary it suffices to prove that  the $p_j$ are cycles. 
This follows from Equation \ref{EquationLinearDependence}
once we know that the set $\{e_hq_j\}$ is linearly independent.
To see this, observe that we have an embedding of graded $k$-vector spaces
$$
R\langle Y_{\ls i-1}\rangle_{\|\fv_1\|-1, \fv_1}
\otimes_k
R\langle Y_{\ls i-1}\rangle_{\|\fv_2\|-1, \fv_2}
\hookrightarrow
R\langle Y_{\ls i-1}\rangle_{\|\fv_1+\fv_2\|-2,\fv_1+\fv_2}.
$$
as the tensor product of the two monomial $k$-bases in the LHS is mapped injectively into  the monomial $k$-basis of the RHS, because 
as no $t_lt_{l+1}$ arises in the products 
by the assumption on $\fv_1$ and $\fv_2$.
This completes the proof.
\end{proof}

Given ${\bf v}=(v_1,\ldots,v_n)\in \mathbb{N}^n$, we denote the vector $(v_1,\ldots,v_n, 0)\in \mathbb{N}^{n+1}$ by $\fv^a$. We denote by $S[T_{n+1}]=k[T_1,\ldots,T_{n+1}]$, the polynomial ring in $n+1$ variables over $k$.

\begin{lemma}\label{LemmaConstantMultigradedDeviations}
Let $S=k[T_1,\ldots,T_n]$ with $n\gs 3$ and ${\bf v}=(v_1,\ldots,v_n)\in \mathbb{N}^n$, then 

\begin{enumerate}
\item[(a)] $\varepsilon_{\mathbf{v}}(S/\pat{n}) =\varepsilon_{\mathbf{v}^a}(S[T_{n+1}]/\pat{n+1})$.
\end{enumerate}
Moreover, if either $v_1=0$ or $v_n=0$ then 
\begin{enumerate}
\item[(b)] $\varepsilon_{\mathbf{v}}(S/\cyc{n}) =\varepsilon_{\mathbf{v}^a}(S[T_{n+1}]/\cyc{n+1})$,
\item[(c)] $\varepsilon_{\mathbf{v}}(S/\pat{n}) =\varepsilon_{\mathbf{v}}(S/\cyc{n})$.
\end{enumerate}
\end{lemma}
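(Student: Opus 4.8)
The plan is to recover the multigraded deviations $\varepsilon_{\fv}$ from the multigraded Hilbert series, which for a quotient by an edge ideal is completely explicit. If $J\subseteq k[T_1,\dots,T_m]$ is a quadratic monomial ideal, then $R=k[T_1,\dots,T_m]/J$ is Koszul, so $\varepsilon_{i,\fv}(R)\neq 0$ forces $i=\|\fv\|$. Substituting $z=-1$ in the product expansion \eqref{MultigradedExpansion} and combining it with Proposition~\ref{MultPoncareHilbert} then yields the factorization
\[
\HS_R(\fxi)=\prod_{\fv\in\NN^m\setminus\{0\}}\bigl(1-\fxi^{\fv}\bigr)^{(-1)^{\|\fv\|}\varepsilon_{\fv}(R)},
\]
an identity of formal power series, the product being convergent in the $(x_1,\dots,x_m)$-adic topology since only finitely many factors affect any prescribed total degree. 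Since the integer exponents in such a factorization of a power series with constant term $1$ are uniquely determined (one reads off $\varepsilon_{\fv}(R)$ by induction on $\|\fv\|$, or by passing to logarithms and applying M\"obius inversion), the deviations $\varepsilon_{\fv}(R)$ are exactly the exponents of $\HS_R(\fxi)$. I would isolate this as a preliminary remark.

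Next I would record the effect of setting one variable to zero. The substitution $x_j\mapsto 0$ is a continuous $k$-algebra homomorphism, hence commutes with the convergent product above, and it kills precisely the factors indexed by $\fv$ with $v_j>0$; therefore the series obtained from $\HS_R(\fxi)$ by putting $x_j=0$ has factorization $\prod_{\fv:\,v_j=0,\ \fv\neq 0}\bigl(1-\fxi^{\fv}\bigr)^{(-1)^{\|\fv\|}\varepsilon_{\fv}(R)}$. On the other hand, putting $x_j=0$ in $\HS_R$ merely discards the monomials of $R$ divisible by $T_j$; for $R=k[T_1,\dots,T_m]/I(\G)$ what survives is the Hilbert series of $k[T_i:i\neq j]/I(\G')$, where $\G'$ is the induced subgraph of $\G$ on $\{1,\dots,m\}\setminus\{j\}$ (the edges meeting $j$ impose no condition on monomials not divisible by $T_j$). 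Combining the two descriptions with the uniqueness above: if two such quotients have Hilbert series that coincide after deletion of a single variable, then their deviations indexed by multidegrees supported away from that variable are equal.

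All three parts now follow by choosing the right vertex to delete. For (a), the induced subgraph of $\P_{n+1}$ on $\{1,\dots,n\}$ is $\P_n$, so $\HS_{S[T_{n+1}]/\pat{n+1}}$ becomes $\HS_{S/\pat{n}}$ upon setting $x_{n+1}=0$, whence $\varepsilon_{\fv^a}(S[T_{n+1}]/\pat{n+1})=\varepsilon_{\fv}(S/\pat{n})$ for every $\fv\in\NN^n$. For (c), deleting vertex $n$ from $\P_n$ and from $\C_n$ yields the \emph{same} graph $\P_{n-1}$, so $\HS_{S/\pat{n}}$ and $\HS_{S/\cyc{n}}$ agree after setting $x_n=0$; hence $\varepsilon_{\fv}(S/\pat{n})=\varepsilon_{\fv}(S/\cyc{n})$ whenever $v_n=0$, and the automorphism $T_i\mapsto T_{n+1-i}$, which preserves both $\pat{n}$ and $\cyc{n}$ and reverses multidegrees, upgrades this to the case $v_1=0$. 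For (b), deleting vertex $n+1$ from $\C_{n+1}$ breaks the cycle into the path $\P_n$ on $\{1,\dots,n\}$, so $\HS_{S[T_{n+1}]/\cyc{n+1}}$ becomes $\HS_{S/\pat{n}}$ upon setting $x_{n+1}=0$, giving $\varepsilon_{\fv^a}(S[T_{n+1}]/\cyc{n+1})=\varepsilon_{\fv}(S/\pat{n})$ for all $\fv\in\NN^n$; under the hypothesis $v_1=0$ or $v_n=0$, part (c) rewrites the right-hand side as $\varepsilon_{\fv}(S/\cyc{n})$, which is the assertion.

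I do not anticipate a real obstacle: the whole argument rests on the single observation that the deviations of a Koszul algebra are the (uniquely determined) exponents in the product expansion of its Hilbert series. The remaining steps are routine bookkeeping — verifying that specialization of a variable commutes with the infinite product and deletes exactly the factors with $v_j>0$, checking the elementary identifications of the induced subgraphs of $\P_{n+1}$, $\C_{n+1}$, $\P_n$, $\C_n$ obtained by removing the last vertex, and keeping the signs $(-1)^{\|\fv\|}$ in place so that equality of exponents gives equality of the $\varepsilon_{\fv}$ themselves.
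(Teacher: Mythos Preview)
Your argument is correct and rests on the same engine as the paper's: the identity $P_k^R(-1,\fxi)\,\HS_R(\fxi)=1$ together with the product expansion \eqref{MultigradedExpansion}, which for a Koszul algebra pins down each $\varepsilon_\fv$ from the multigraded Hilbert series. The paper carries this out by an explicit induction on $\|\fv\|$: it clears denominators to obtain Equation~\eqref{EquationMultigradedPoincare}, reduces modulo the monomials not dividing $\fxi^\fv$, matches the lower coefficients $c_\fu(R)=c_{\fu^a}(R')$ by hand, and solves the resulting linear relation for $\varepsilon_\fv$. Your route repackages the same idea more cleanly: you recognize the Hilbert series as $\prod_{\fv\neq 0}(1-\fxi^\fv)^{(-1)^{\|\fv\|}\varepsilon_\fv}$ with uniquely determined exponents, and then observe that the substitution $x_j\mapsto 0$ simultaneously kills the factors with $v_j>0$ and, on the combinatorial side, deletes vertex $j$ from the graph. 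This global viewpoint avoids the step-by-step induction, makes transparent why the hypothesis $v_1=0$ or $v_n=0$ enters (it is exactly what lets the relevant vertex deletions of $\P_n$ and $\C_n$ coincide), and immediately generalizes to arbitrary vertex deletion in any graph. The paper's approach, by contrast, needs no separate appeal to uniqueness of infinite product factorizations and stays entirely within elementary manipulation of the single equation~\eqref{EquationMultigradedPoincare}.
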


\begin{proof}
Let $R=S/\pat{n}$ and $R'=S[T_{n+1}]/\pat{n+1}$. 
From Proposition \ref{MultPoncareHilbert} and Equation \ref{MultigradedExpansion} we have 
\begin{equation}\label{EquationMultigradedPoincare}
\prod_{\|\mathbf{v}\|\, odd} (1-\fxi^\mathbf{v})^{\ve_\mathbf{v}(R)} \sum_{\fv\in\NN^n} c_\mathbf{v}(R)
\fxi^\mathbf{v}=
\prod_{\|\mathbf{v}\|\, even} (1-\fxi^\mathbf{v})^{\ve_\mathbf{v}(R)},
\end{equation}
where $c_\mathbf{v}(R)$ is the coefficient of $\fxi^\fv$ in $\HS_R(\fxi)$, 
namely
$c_\mathbf{v}(R)=0$ if $\mathbf{v}$ has two consecutive positive components and $c_\mathbf{v}(R)=1$ otherwise. 

(a) 
We proceed by induction on $\|\mathbf{v}\|$.
If $\|\mathbf{v}\|=1$ then  $\varepsilon_{\fv} (R)=\varepsilon_{\fv^a}(R')=1$, 
as these deviations corresponds to the elements $t_i$ with $1\ls i\ls n$. 
Assume now that $\|\fv\|>1$.
We reduce  Equation \ref{EquationMultigradedPoincare} modulo the ideal of $\mathbb{Z}[[\xi_1,\ldots,\xi_n]]$ generated by the monomials $\fxi^\mathbf{w}\nmid\fxi^\fv$.
Every surviving multidegree $\mathbf{u}$ other than $\mathbf{v}$ 
verifies $\|\mathbf{u}\|<\|\fv\|$,
hence by induction 
$\varepsilon_{\mathbf{u}}(R) =\varepsilon_{\mathbf{u}^a}(R')$.
Furthermore, it is clear that $c_\mathbf{u}(R)=c_{\mathbf{u}^a}(R')$. 
Hence after reducing the corresponding Equation \ref{EquationMultigradedPoincare} for $R'$ modulo the ideal of $\mathbb{Z}[[\xi_1,\ldots,\xi_{n+1}]]$ generated by the monomials 
$\fxi^\fw\nmid\fxi^{\fv^a}$
and solving the two equations for $\varepsilon_{\fv}(R)$ and $\varepsilon_{\mathbf{v}^a}(R')$ respectively, we obtain $\varepsilon_{\mathbf{v}}(R) =\varepsilon_{\mathbf{v}^a}(R')$ as desired.

(b)
The same argument as above works,
however we need to assume that either $v_1=0$ or $v_n=0$ to guarantee that $c_\mathbf{u}(S/\cyc{n})=c_{\mathbf{u}^a}(S[T_{n+1}]/\cyc{n+1})$. 

(c)
It follows  by induction and 
because
the support of a vector in  the set 
$\{\fw\,:\,\fxi^\fw\mid \fxi^\fv\}$ 
is an interval if and only if it is a cyclic interval,
provided that either $v_1$ or $v_n=0$.
\end{proof}

We say that a vector is {\bf squarefree} if its components are either 0 or 1.
In the following proposition we determine $\varepsilon_{\fv}$ for squarefree vectors $\fv$;
this result will also be useful in Section \ref{SectionKoszulHomology}.
We denote by $\f1_n$ the vector $(1,\ldots,1)\in\NN^n$.

\begin{prop}\label{PropositionSquarefreeDeviations}
Let $S=k[T_1,\ldots,T_n]$ with $n\gs 3$ and ${\bf v}\in \mathbb{N}^n$ be a squarefree vector.

\begin{enumerate}
\item[a)] If $R=S/\pat{n}$, then  $\varepsilon_{\mathbf{v}}(R)=1$ if $\Supp(\fv)$ is an interval and $\varepsilon_{\mathbf{v}}(R)=0$ otherwise.

\item[b)] If $R=S/\cyc{n}$ and $\fv\neq\f1_n$, then  $\varepsilon_{\mathbf{v}}(R)=1$ if $\Supp(\fv)$ is a cyclic interval and $\varepsilon_{\mathbf{v}}(R)=0$ otherwise. 
Furthermore, $\varepsilon_{\f1_n}(R)=n-1$.
\end{enumerate}

\end{prop}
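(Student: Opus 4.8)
The plan is to compute $\varepsilon_{\fv}(R)$ for squarefree $\fv$ by reducing to the previously established lemmas and to an explicit count. For part (a), Lemma~\ref{LemmaConsecutiveDeviations}(a) already gives one direction: if $\Supp(\fv)$ is not an interval then $\varepsilon_{\fv}(R)=0$. So I only need to show $\varepsilon_{\fv}(R)=1$ when $\Supp(\fv)$ is an interval. By relabeling, a squarefree $\fv$ with interval support is of the form $\f1_m$ sitting in positions $a,a+1,\dots,a+m-1$; using Lemma~\ref{LemmaConstantMultigradedDeviations}(a) repeatedly (padding with zeros on the right, and its obvious left-hand analogue, or simply the observation that the computation only depends on the ``window'' of consecutive variables actually appearing), the quantity $\varepsilon_{\fv}(R)$ equals $\varepsilon_{\f1_m}(S_m/\pat{m})$ where $S_m=k[T_1,\dots,T_m]$. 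So part (a) reduces to the single assertion $\varepsilon_{\f1_n}(S/\pat{n})=1$ for all $n\gs 1$. This I would prove by induction on $n$ using Equation~\ref{EquationMultigradedPoincare} in the top multidegree $\f1_n$: reduce that equation modulo the ideal generated by monomials not dividing $\fxi^{\f1_n}$; every surviving multidegree $\mathbf{u}\neq \f1_n$ is squarefree with strictly smaller norm, so by induction (and part of the already-known interval/cyclic-interval dichotomy) its deviation is known, and $c_{\mathbf{u}}(R)$ is $0$ or $1$ according to whether $\mathbf{u}$ has two consecutive positive entries. Solving for $\varepsilon_{\f1_n}(R)$ then amounts to checking that the resulting integer is $1$; concretely, one tracks the coefficient of $\fxi^{\f1_n}$ on both sides, where on the right only the even-norm factor $(1-\fxi^{\f1_n})^{\varepsilon_{\f1_n}}$ (if $n$ even) or nothing new (if $n$ odd) contributes at the top, and the left side is a product over proper interval sub-supports times the Hilbert series of the path algebra. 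I expect this bookkeeping to close cleanly because the path Hilbert series $\HS_R(\fxi)=\sum_{\text{independent sets}}\fxi^{\fv}$ has a well-understood product/Fibonacci structure in the squarefree directions.

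For part (b), the same reduction via Lemma~\ref{LemmaConstantMultigradedDeviations}(b),(c) handles every squarefree $\fv$ whose support is a proper (cyclic) subinterval of $\{1,\dots,n\}$: such a $\fv$ can be rotated so that $v_1=0$ or $v_n=0$, at which point Lemma~\ref{LemmaConstantMultigradedDeviations}(c) identifies its deviation with the corresponding path deviation, which is $1$ by part (a) since a cyclic interval missing a vertex is an honest interval. And Lemma~\ref{LemmaConsecutiveDeviations}(b) kills the case where $\Supp(\fv)$ is not a cyclic interval. So the entire content of (b) beyond (a) is the single computation $\varepsilon_{\f1_n}(S/\cyc{n})=n-1$.

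To get $\varepsilon_{\f1_n}(S/\cyc{n})=n-1$ I would again use Equation~\ref{EquationMultigradedPoincare}, now for $R=S/\cyc{n}$, reduced modulo monomials not dividing $\fxi^{\f1_n}$. Every proper divisor multidegree is a proper subset of $\{1,\dots,n\}$, hence (after the rotation argument) its deviation is already known from part (a), and its support is a cyclic interval exactly when it is an interval in the rotated frame. The Hilbert series coefficient $c_{\f1_n}(\cyc{n})$ is $0$ (the full vertex set is not independent in a cycle), and the lower coefficients $c_{\mathbf{u}}$ count independent sets of the $n$-cycle, whose top-degree count ($n-1$ independent sets of size $n-2$, none of size $n-1$, reading off the antipenultimate coefficient structure) is what ultimately forces the extra $n-1$. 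Comparing the coefficient of $\fxi^{\f1_n}$ on both sides: the right side contributes, from the odd/even product, a term whose leading correction is $-\varepsilon_{\f1_n}$ times a sign plus contributions from products of lower even-norm interval factors; the left side contributes the product of the $(1-\fxi^{\mathbf u})$ over proper intervals against $\HS_{\cyc n}(\fxi)$, and one reads off that the discrepancy between this and the path case is precisely $n-1$. The main obstacle is exactly this last coefficient extraction — making sure all the cross-terms among the many lower-degree interval factors are accounted for and that the parities work out; I would handle it by introducing a single auxiliary variable specialization $\xi_i \mapsto \xi$ (collapsing to the ungraded squarefree strand, i.e. using $\HS_{\cyc n}$ restricted to multilinear terms) so that the identity becomes a one-variable polynomial identity of low degree that can be verified directly, then lifting the conclusion back to the multigraded statement using the support restrictions already proved.
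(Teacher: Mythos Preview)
Your reduction via Lemmas~\ref{LemmaConsecutiveDeviations} and~\ref{LemmaConstantMultigradedDeviations} (together with the rotational symmetry of the cycle and the reflection symmetry of the path) is correct and boils the whole proposition down to two numbers: $\varepsilon_{\f1_m}(S_m/\pat{m})=1$ and $\varepsilon_{\f1_n}(S/\cyc{n})=n-1$. The paper computes neither of these inductively. Instead it applies Berglund's formula \cite[Theorem~2]{Berglund}, which gives $\varepsilon_{\fv}(R)=\dim_k\widetilde{H}_{p-3}(\Delta'_{M_{\fv}};k)$ for an explicit simplicial complex $\Delta'_{M_{\fv}}$ read off from the monomial generators lying below $\fv$. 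For a path interval of length $p$ this complex is the $(p-3)$-skeleton of a $(p-2)$-simplex, i.e.\ $S^{p-3}$, so the deviation is $1$; for $\f1_n$ on the cycle it is the $(n-3)$-skeleton of an $(n-1)$-simplex, homotopy equivalent to $\bigvee^{n-1}S^{n-3}$, giving $n-1$. Thus the paper's proof is a one-line topological computation once Berglund is invoked, with no recursion.

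Your alternative via Equation~\ref{EquationMultigradedPoincare} is sound in principle---that identity does determine each $\varepsilon_{\fv}$ recursively---but you have not actually carried out the key step, and the shortcut you propose does not work as stated. The specialization $\xi_i\mapsto\xi$ is not a ring homomorphism on the squarefree truncation $\ZZ[\xi_1,\ldots,\xi_n]/(\xi_i^2)$, so you cannot push the product identity through and then read off the coefficient of $\xi^n$; and specializing \emph{before} truncating lands you in the ordinary graded Poincar\'e--Hilbert relation, which mixes in all non-squarefree deviations of norm $n$ and no longer isolates $\varepsilon_{\f1_n}$. What has to be done is to extract the coefficient of $\fxi^{\f1_n}$ directly: on each side this is a signed count of partitions of $\{1,\ldots,n\}$ into disjoint intervals of prescribed length-parity, convolved on the left with the independent sets of $\P_n$ or $\C_n$. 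That count is tractable (for the path it telescopes to $1$; for the cycle the discrepancy with the path is what produces the extra $n-1$), but it is exactly the content of the proposition, and ``I expect this bookkeeping to close cleanly'' is not a proof.
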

\begin{proof}
We are going to apply  \cite[Theorem 2]{Berglund} and
we follow the notation therein.

(a)
By Lemma \ref{LemmaConsecutiveDeviations}  if $\varepsilon_{\mathbf{v}}(R)\neq 0$
then $\Supp(\fv)$ is an interval. 
Let $p=\|\fv\|$, 
the statement is clear for $p\ls 2$ by  \cite[Lemma 5]{Berglund}, 
hence we may assume $p\gs 3$. 
We have that $M_{\fv}=\{T_aT_{a+1},\ldots, T_{a+p-2}T_{a+p-1}\}$ for some $a$. Notice that any subset of $p-2$ elements of $M_{\fv}$ is either disconnected or $m_S\neq m_{M_{\fv}}$ then $\Delta'_{M_{\fv}}$ is the $p-3$ skeleton of a $p-2$ simplex. 
 Let $S^d$ be the unit sphere in $\RR^{d+1}$. Then
 \cite[Theorem 2]{Berglund} yields 
 $$
 \varepsilon_{\fv}(R)=\dim_k \widetilde\Ho_{p-3}(\Delta'_{M_{\fv}}; k)=\dim_k \widetilde\Ho_{p-3}(S^{p-3}; k)=1.
 $$  

(b)
The same argument as above works for $\fv\neq \f1_n$.
If $\fv=\f1_n$, then $M_{\f1_n}=\{T_1T_2,\ldots, T_{n}T_{1}\}$, 
and $\Delta'_{M_{\f1_n}}$ is the $n-3$ skeleton of an $n-1$ simplex, and by \cite[Theorem 2]{Berglund} we have 
$$
\varepsilon_{ \f1_n}(R)=\dim_k \widetilde\Ho_{n-3}(\Delta'_{M_{\f1_n}}; k)=
\dim_k \widetilde\Ho_{n-3}\left(\bigvee^{n-1}S^{n-3}; k\right)=n-1.
$$
\end{proof}

The next theorem determines the first $n$ deviations of the $n$-cycle and the first $n+1$ deviations of the $n$-path.

\begin{thm}\label{TheoremExistenceSequences}
Let $S=k[T_1,\ldots,T_n]$. There exist two sequences of natural numbers $\{\gamma_s\}_{s\gs 1}$ and $\{\alpha_s\}_{s\gs 1}$ such that for every $n\gs 3$

\begin{enumerate}
\item[(a)]  $\varepsilon_s(S/\pat{n}) = \gamma_sn-\alpha_s$ for  $s\ls n+1$;

\item[(b)] $\varepsilon_s(S/\cyc{n}) = \gamma_sn$ for  $s< n$ and $\varepsilon_n(S/\cyc{n}) = \gamma_nn-1$.
\end{enumerate}
\end{thm}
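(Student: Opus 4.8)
The plan is to leverage the recursive structure already exposed in Lemmas~\ref{LemmaConsecutiveDeviations} and~\ref{LemmaConstantMultigradedDeviations}, reducing everything to a count of nonzero multigraded deviations with a fixed multidegree shape. First I would observe that, by Lemma~\ref{LemmaConsecutiveDeviations}, the total deviation $\varepsilon_s(S/\pat{n})$ is a sum of $\varepsilon_{\fv}(S/\pat{n})$ over vectors $\fv$ with $\|\fv\|=s$ whose support is an interval, and similarly $\varepsilon_s(S/\cyc{n})$ is a sum over vectors whose support is a cyclic interval. I would then stratify these vectors by the \emph{length} $\ell$ of their support interval (i.e.\ $|\Supp(\fv)|$), noting that the value $\varepsilon_{\fv}(S/\pat{n})$ depends only on $s$ and on the internal shape of $\fv$ within its supporting interval, not on where that interval sits inside $\{1,\ldots,n\}$. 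This translation invariance is exactly what Lemma~\ref{LemmaConstantMultigradedDeviations}(a) provides: padding by a zero coordinate does not change the deviation, and iterating this lets one move a supported block freely.

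The key combinatorial input is then counting placements. For the path $\P_n$: an interval of length $\ell$ can be placed in exactly $n-\ell+1$ ways inside $\{1,\ldots,n\}$. So if I define $d_{\ell,s}$ to be the number of vectors $\fv$ supported on the fixed interval $\{1,\ldots,\ell\}$ with full support, $\|\fv\|=s$, and $\varepsilon_{\fv}(S/\pat{\ell})>0$ (equivalently, weighted by $\varepsilon_{\fv}$, but for $s\ls n+1$ one should check these deviations are $0$ or $1$ in the relevant range via Lemma~\ref{LemmaConstantMultigradedDeviations}(a) reducing to small cases), then
\begin{equation*}
\varepsilon_s(S/\pat{n}) = \sum_{\ell} d_{\ell,s}\,(n-\ell+1) = \Big(\sum_{\ell} d_{\ell,s}\Big) n - \Big(\sum_{\ell} d_{\ell,s}(\ell-1)\Big),
\end{equation*}
which has the form $\gamma_s n - \alpha_s$ with $\gamma_s = \sum_\ell d_{\ell,s}$ and $\alpha_s = \sum_\ell d_{\ell,s}(\ell-1)$, both independent of $n$ \emph{provided} the range of $\ell$ that contributes is bounded independently of $n$ and stabilizes once $n$ is large enough relative to $s$; one must check that for $s\le n+1$ all contributing intervals have length $\le s$ (clear, since $\|\fv\|=s\ge \ell$ as each supported coordinate is $\ge 1$) and that $d_{\ell,s}$ computed inside $\P_n$ agrees with $d_{\ell,s}$ computed inside $\P_\ell$ — again Lemma~\ref{LemmaConstantMultigradedDeviations}(a). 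The genuinely careful point is the boundary case $s=n+1$, where an interval of length $\ell=n$ or the full vector may behave differently; here I would invoke that for $\P_n$ the support can never be all of $\{1,\ldots,n\}$ with a "wrap-around" issue, so the formula extends.

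For the cycle $\C_n$: a \emph{proper} cyclic interval (one that is not all of $\{1,\ldots,n\}$) of length $\ell<n$ sits inside $\{1,\ldots,n\}$ in exactly $n$ rotational positions, and by Lemma~\ref{LemmaConstantMultigradedDeviations}(c) a vector supported on such a proper cyclic interval has the same deviation whether computed in $\C_n$ or in $\P_n$ (or $\P_\ell$). Hence for $s<n$, every contributing $\fv$ has proper support, giving $\varepsilon_s(S/\cyc{n}) = \big(\sum_\ell d_{\ell,s}\big) n = \gamma_s n$ with the \emph{same} $\gamma_s$ as the path case — the $-\alpha_s$ correction disappears precisely because on a cycle every length-$\ell$ block has $n$ placements rather than $n-\ell+1$. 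For $s=n$, the only extra vector is $\f1_n$ itself (support all of $\{1,\ldots,n\}$, $\|\f1_n\|=n$), whose deviation is $n-1$ by Proposition~\ref{PropositionSquarefreeDeviations}(b), whereas on the path side the analogous "wrap" vectors do not exist; bookkeeping the difference between the $n$ cyclic placements of proper blocks (counted by $\gamma_n n$) and the genuine count then yields $\varepsilon_n(S/\cyc{n}) = \gamma_n n - 1$. I expect the main obstacle to be making the translation-invariance and stabilization arguments fully rigorous at the boundary of the stated range (the $s=n+1$ path case and the $s=n$ cycle case), i.e.\ verifying that no "large" interval or wrap-around contribution is being miscounted and that the reductions of Lemma~\ref{LemmaConstantMultigradedDeviations} apply with the hypotheses $v_1=0$ or $v_n=0$ satisfied; the rest is a clean placement count.
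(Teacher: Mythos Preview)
Your approach is correct and essentially the same as the paper's: both use Lemmas~\ref{LemmaConsecutiveDeviations}--\ref{LemmaConstantMultigradedDeviations} and Proposition~\ref{PropositionSquarefreeDeviations} to reduce $\varepsilon_s$ to a count of multigraded contributions indexed by the shape of a vector on a (cyclic) interval, and then exploit that this shape data is independent of $n$. The paper packages the count via orbits under the $\ZZ_n$-action (cycles) and an injection $\psi:\mathcal{E}(s,n)\hookrightarrow\mathcal{E}(s,n+1)$ (paths), showing directly that $\varepsilon_s(S/\cyc{n})/n$ and $\varepsilon_s(S/\pat{n+1})-\varepsilon_s(S/\pat{n})$ are independent of $n$; you instead write down the closed forms $\gamma_s=\sum_\ell d_{\ell,s}$ and $\alpha_s=\sum_\ell (\ell-1)d_{\ell,s}$ from a placement count. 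These are two phrasings of the same argument, and your boundary bookkeeping at $s=n$ (cycles) and $s=n+1$ (paths) matches the paper's: the $\ell=n+1$ term in $\gamma_{n+1}n-\alpha_{n+1}$ vanishes automatically, and for the cycle the single full-support vector $\f1_n$ contributes $n-1$ rather than the $n\cdot d_{n,n}=n$ that the formula $\gamma_n n$ would predict, yielding the $-1$.

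One caveat: your parenthetical suggestion that the relevant multigraded deviations are $0$ or $1$ is not true in general for non-squarefree $\fv$, and you should not try to check it. The argument only needs the weighted quantity $d_{\ell,s}=\sum_{\Supp(\fv)=\{1,\ldots,\ell\},\,\|\fv\|=s}\varepsilon_{\fv}(S/\pat{\ell})$, whose $n$-independence is exactly Lemma~\ref{LemmaConstantMultigradedDeviations}(a) (together with the reversal symmetry $T_i\leftrightarrow T_{n+1-i}$ of $\pat{n}$, which lets you strip zeros on the left as well as the right). With that definition your displayed identity is correct as written.
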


\begin{proof}
(b)
Fix $s\gs 1$ and for every $n\gs s$ define  the set
$$\mathcal{E}(s, n)=\big\{\mathbf{v}\in \NN^n \, : \,  \|\mathbf{v}\|=s,\, \varepsilon_{\mathbf{v}}(S/\cyc{n})>0 \big\}.
$$
Let $R=S/\cyc{n}$ and $R'=S[T_{n+1}]/\cyc{n+1}$. 
Assume first that $s<n$.
The group $\ZZ_n:=\ZZ/n\ZZ$ acts on  $\mathcal{E}{(s,n)}$ by permuting the components of a vector cyclically and
by the symmetry of $\cyc{n}$  
the
deviations are constant in every orbit. 
By Lemma \ref{LemmaConsecutiveDeviations}
the support of each $\fv \in \mathcal{E}{(s,n)}$ 
is a cyclic interval and since some component of $\fv$ is 0 we conclude that
each orbit contains exactly $n$ elements.
Similarly, every orbit in the action of $\ZZ_{n+1}$ on $\mathcal{E}{(s,\,n+1)}$ has $n+1$ elements. 
We denote orbits by $[\cdot]$ and for a given $\mathbf{v}\in\mathcal{E}{(s,\,n)}$ we denote by $\bar{\fv}=(\overline{v_1},\ldots, \overline{v_n})$ the only vector in $[\fv]$ such that $\overline{v_1}\neq 0$ and $\overline{v_n}=0$. 
The map 
$\phi\colon\,\, \mathcal{E}{(s,\,n)}/\ZZ_n  \to \mathcal{E}{(s,\,n+1)}/\ZZ_{n+1}$
defined via 
$[\bar{\mathbf{v}}] \xmapsto{\phantom{\ZZ_n}} [\bar{\mathbf{v}}^a]$
is well-defined and bijective  by Lemmas \ref{LemmaConsecutiveDeviations} and \ref{LemmaConstantMultigradedDeviations},
and moreover  $\ee_{[\fv]}(R)=\ee_{\phi([\fv])}(R')$.
Since the multigraded deviations refine the deviations we obtain

\begin{equation}\label{EquationDeviationsOrbits}
\frac{\varepsilon_s(R)}{n}=
\sum_{[\fv]\in\mathcal{E}(s,\,n)/\ZZ_n}\ve_{[\fv]}(R) \\
=\sum_{[\fv]\in\mathcal{E}(s,\,n)/\ZZ_n}\ve_{\phi([\fv])}(R')=
\frac{\varepsilon_s(R')}{n+1}.
\end{equation}
It follows that $\varepsilon_s(S/\cyc{n}) = \gamma_s n $ for every $n> s$,  for some natural number $\gamma_s$. 
Now consider the case $s=n$.
By Proposition \ref{PropositionSquarefreeDeviations}
we have $\ve_{\f1_n}(R)=n-1$ and $\ve_{\f1_n^a}(R')=1$. 
The orbit $[\f1_n]$ consists of 1 element,
while the orbit $[\f1_n^a]$ consists of $n+1$ elements, thus in this case we modify Equation \ref{EquationDeviationsOrbits} to obtain
$$
\frac{\ve_n(R)-(n-1)}{n}=
\frac{\ve_n(R') - (n+1)}{n+1}.
$$
Hence $\frac{\ve_n(R)-(n-1)}{n}= \gamma_{n}-1$ and the conclusion follows.

(a) 
Fix $s\gs 1$, and similarly define the set
$$
\mathcal{E}(s, n)=\big\{\mathbf{v}\in \NN^n \, : \,  \|\mathbf{v}\|=s,\, \varepsilon_{\mathbf{v}}(S/\pat{n})>0 \big\}.
$$ 
Let $R=S/\pat{n}$, $R'=S[T_{n+1}]/\pat{n+1}$, and assume $s\ls n$. By Lemma \ref{LemmaConsecutiveDeviations},
if $\fu=(u_1,\ldots,u_{n+1})\in\mathcal{E}(s,\,n+1)$, then either $u_1=0$ or $u_{n+1}=0$. 
The map 
$\psi\colon\,\, \mathcal{E}{(s,\,n)}  \to \mathcal{E}{(s,\,n+1)}$
defined via 
$\mathbf{v} \xmapsto{\phantom{\ZZ_n}} \mathbf{v}^a $
is injective.
By Lemma \ref{LemmaConstantMultigradedDeviations} $\Im(\psi)=\big\{\mathbf{u}=(u_1,\ldots,u_{n+1})\in\mathcal{E}(s,\,n+1)\,:\, u_{n+1}=0\big\}$ and if $\mathbf{u}\in\Im(\psi)$ then $\varepsilon_{\psi^{-1}(\fu)}(R)=\varepsilon_{\fu}(R')$.
We conclude that
$$
\varepsilon_s(R')-\varepsilon_s(R)= \sum_{\fu\in\mathcal{E}(s,\,n+1)\setminus \Im(\psi)}\ve_{\fu}(R').
$$ 
Since 
$$\mathcal{E}(s,\,n+1)\setminus \Im(\psi)=\big\{\mathbf{u}=(u_1,\ldots,u_{n+1})\in\mathcal{E}(s,\,n+1)\,:\, u_1=0,\,u_{n+1}\neq 0\big\},$$
then it follows that $\varepsilon_s(R')-\varepsilon_s(R)=\gamma_s$, by the proof of part (b) and Lemma \ref{LemmaConstantMultigradedDeviations} (c). 

Finally, let $s=n+1$. In this case, the difference 
$\varepsilon_{n+1}(R')-\varepsilon_{n+1}(R)$ is equal to the sum of $\varepsilon_{\f1_{n+1}}(R')$ and all   $\varepsilon_{\mathbf{u}}(R')$ where $\mathbf{u}=(u_1,\ldots,u_{n+1}) \in \mathcal{E}(n+1,\,n+1)$ with $u_1=0, u_{n+1}\neq 0$. By Proposition \ref{PropositionSquarefreeDeviations}, $\varepsilon_{\f1_{n+1}}(R')=1$, and thus  $\varepsilon_{n+1}(R')-\varepsilon_{n+1}(R)=\gamma_{n+1}$.

We have proved the existence of the sequence of integers $\{\alpha_s\}_{s\gs 1}$; they are  non-negative as by Lemma \ref{LemmaConstantMultigradedDeviations} (c) we have $\ee_s(S/\cyc{n})\gs\ee_s(S/\pat{n})$ for every $n> s$. 
\end{proof}

\begin{remark}\label{FurtherDeviations}
Explicit formulas for the graded Betti numbers of $\pat{n}$ and $\cyc{n}$ were found in \cite{Jacques} using Hochster's formula;
combining these formulas and Equation \ref{EquationHilbertPoincare}
one can deduce a recursion  for  the deviations. 
Through this  recursion  we noticed that some of the higher deviations also seem to be determined by the sequences $\{\alpha_s\}_{s\gs 1}$ and $\{\gamma_s\}_{s\gs 1}$. 
We observed the following patterns for cycles
\begin{eqnarray*}
\ee_{n+1}(S/\cyc{n})&=&\gamma_{n+1}n-n,\\
\ee_{n+2}(S/\cyc{n})&=&\gamma_{n+2}n-\binom{n+2}{2}+1,\\
\ee_{n+3}(S/\cyc{n})&=&\gamma_{n+3}n-\binom{n+3}{3}-\binom{n+1}{2}+1.\\
\mbox{ and the following patterns for paths:}\\
\ee_{n+2}(S/\pat{n})&=&\gamma_{n+2}n-\alpha_{n+2}+1,\\
\ee_{n+3}(S/\pat{n})&=&\gamma_{n+3}n-\alpha_{n+3}+n+2,\\
\ee_{n+4}(S/\pat{n})&=&\gamma_{n+4}n-\alpha_{n+4}+\binom{n+4}{2}-1,\\
\ee_{n+5}(S/\pat{n})&=&\gamma_{n+5}n-\alpha_{n+5}+\binom{n+5}{3}-\binom{n+3}{2}-1.
\end{eqnarray*} 
Verifying these formulas with the method in the proof of Lemma \ref{LemmaConstantMultigradedDeviations}, 
would require the explicit computation of multigraded deviations for vectors that are not squarefree. 
One possible approach is to use Equation \ref{EquationMultigradedPoincare} and proceed by induction for each multidegree; 
this is an elementary but rather intricate argument. 
Using this method we were able to verify the above identities for $\ee_{n+2}(S/\pat{n})$ and $\ee_{n+1}(S/\cyc{n})$.
\end{remark}

\begin{table}
\caption{Some Values of $\gamma_s$ and $\alpha_s$}
\centering
\begin{tabular}{l l l l l}
\hline
\hline
$s$ & $\gamma_s$ & $\alpha_s$ & $\approx \gamma_s/\gamma_{s-1}$ & $\approx \alpha_s/\alpha_{s-1}$\\
[0.5ex]
\hline
 
1 & 1 & 0 & &\\
2 & 1 & 1 & 1&\\
3 & 1 & 2 & 1&2\\
4 & 2 & 5 & 2&2.5\\
5 & 5 & 14 & 2.5&2.8\\
6 & 12 & 38 & 2.4&2.71\\
7 & 28 & 100 & 2.33 &2.62\\
8 & 68 & 269 & 2.43 &2.69\\
9 & 174 & 744 & 2.56&2.77\\
10 & 450 & 2064 & 2.59&2.77\\
11 & 1166 & 5720 & 2.59&2.77\\
12 & 3068 & 15974 & 2.63& 2.79\\
13 & 8190 & 44940 & 2.67&2.81\\
14 & 22022 & 126854 & 2.69&2.82\\
15 & 59585 & 359118 & 2.71&2.83\\
16 & 162360 & 1020285 & 2.72&2.84\\
17 & 445145 &  2907950 & 2.74&2.85\\
18 & 1226550 & 8309106 & 2.76&2.86\\
19 & 3394654 & 23796520 & 2.77&2.86\\
20 & 9434260 & 68299612 & 2.78&2.87\\
21 & 26317865 & 196420246 & 2.79&2.88\\
22 & 73662754 & 565884418 & 2.8&2.88\\
23 & 206809307 & 1632972230 & 2.81&2.89\\
24 & 582255448 & 4719426574 & 2.82&2.89\\
25 & 1643536725 & 13658698734 & 2.82&2.89\\
\hline
\end{tabular}
\label{table}
\end{table}

Using the recursive formula for deviations mentioned in Remark \ref{FurtherDeviations}, 
we compute with Macaulay2 \cite{Macaulay2} some values of the sequences $\{\alpha_s\}_{s\gs 1}$ and $\{\gamma_s\}_{s\gs 1}$, cf. Table \ref{table}.
From these values we can observe that the sequences $\{\alpha_s\}_{s\gs 1}$ and $\{\gamma_s\}_{s\gs 1}$ seemingly grow exponentially at a ratio that approaches 3. 
This observation is consistent with Theorem \ref{TheoremExistenceSequences} and the asymptotic growth of $\ee_i(S/\pat{n})$ and $\ee_i(S/\cyc{n})$ described in  \cite[4.7]{BDGMS}.

\section{Koszul homology of Koszul algebras}\label{SectionKoszulHomology}

Let $S[X]$ be a minimal model of $R$ over $S$ and let $K^S$ denote the Koszul complex of 
$S$ with respect to $\fn$.
Denoting by $\epsilon^{S[X]}: S[X]\rightarrow R$ and $\epsilon^{K^S}: K^S\rightarrow k$  the augmentation maps,
the following homogeneous DG algebra morphisms

$$\xymatrix@C=20mm{k[X]\cong S[X]\otimes_S k & S[X]  \otimes_S K^S \ar[l]_-{\id_{S[X]}\otimes_S \epsilon^{K^S}} \ar[r]^-{\epsilon^{S[X]}\otimes_S \id_{K^S}} & R\otimes_S K^S \cong K^R }$$
\noindent
are quasi-isomorphisms, i.e., they
 induce $k$-algebra isomorphisms on homology  
\[
\Tor^S(R,k)=H( k[X])\cong H(S[X]  \otimes_S K^S)\cong H( K^R)=H^R
\]
see \cite[2.3.2]{Avramov6Lectures}. Thus we have $\dim_k H^R_{i,j}= \dim_k\Tor_i^S(R,k)_{j}=\beta_{i,j}^S(R)$ for every $i,j\gs 0$, where $\beta_{i,j}^S(R)$ denote the graded Betti numbers of $R$ over $S$.
If $I$ is a monomial ideal, then $H^R$ inherits the $\NN^n$ grading from $K^R$ and for every  $\fv\in \NN^n$ we have  $\dim_k H^R_{i,\fv}=\Tor_i^S(R,k)_{\fv}=\beta_{i,\fv}^S(R)$.
In other words, the graded vector space structure of $H^R$ is completely determined by the Betti table of $R$ over $S$.

It was proved in \cite[5.1]{AvramovConcaIyengar2} that
for a Koszul algebra $R$, if $H^R_{i,j}\neq 0$ then $j\ls 2i$ and $H^R_{i,2i}=\big(H^R_{1,2}\big)^i$. 
We extend this result to the next diagonal in the Betti table.

\begin{thm}\label{KoszulSecondDiag}
Let $R$ be a Koszul algebra. Then
 for every $i\gs 2$ we have
$$
H^R_{i,2i-1}=\big(H^R_{1,2}\big)^{i-2}H^R_{2,3}.
$$
\end{thm}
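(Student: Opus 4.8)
The plan is to work inside the minimal model $S[X]$, using the identification $H^R \cong H(k[X])$ from the quasi-isomorphisms displayed above, and to exploit the bound $j \leq 2i$ together with the description $H^R_{i,2i} = (H^R_{1,2})^i$ already established in \cite[5.1]{AvramovConcaIyengar2}. Since $R$ is Koszul, every variable $X_{i,j}$ of the minimal model satisfies $j \leq 2i - 1$ (this is the model-theoretic counterpart of the Koszul deviation bound: the variables of $S[X]$ in homological degree $i$ live in internal degree $\leq 2i-1$, because a variable in degree $i$ kills a cycle in degree $i-1$, whose internal degree is at most $2(i-1)$, hence $\leq 2i-1$ after the shift; one should cite the Koszul bound on deviations, e.g.\ via Equation \eqref{EquationHilbertPoincare} or \cite{AvramovConcaIyengar}). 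The inclusion $\supseteq$ is immediate since $H^R$ is a graded $k$-algebra and the product of classes of degrees $(1,2)$ ($i-2$ times) and $(2,3)$ lands in bidegree $(i, 2i-1)$. The content is the reverse inclusion.

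First I would set up the linear-strand truncation. Let $A = k[X]$ with its differential. Because $R$ is Koszul, in each homological degree $i$ the differential on $A_i$ can only decrease internal degree by $1$, and the subcomplex/quotient structure of $A$ splits $A_i$ according to internal degree. I would focus on the part of $A$ spanned by products of variables all of which lie on the ``top diagonal'' $j = 2i-1$ or $j=2i$: concretely, $X_{1,2}$, $X_{2,3}$, and for $i \geq 2$ the variables $X_{i,2i-1}$, $X_{i,2i}$. A monomial $X_{i_1,*}\cdots X_{i_r,*}$ in such variables contributes to bidegree $(i,j)$ with $i = \sum i_\ell$ and $j \leq 2i$, with equality on $j$ only if every factor is of the form $X_{1,2}$ or $X_{i_\ell, 2i_\ell}$, and $j = 2i-1$ only if exactly one factor drops by one, i.e.\ exactly one factor is $X_{i_\ell, 2i_\ell - 1}$ and the rest are of top type. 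The differential $\partial X_{i,2i-1}$ is a cycle in $A_{i-1}$ of internal degree $2i-1 = 2(i-1)+1$, hence a polynomial expression in lower variables landing in the ``second diagonal'' there.

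Next, the key step: I would argue that the second-diagonal homology $H_{i,2i-1}$ is computed by the subquotient complex built from the top two diagonals, and then run an induction on $i$. For $i=2$ the statement is tautological. For the inductive step, a cycle $z$ in $A_i$ of internal degree $2i-1$ is, modulo boundaries, a $k$-linear combination of (a) a variable $X_{i,2i-1}$, (b) monomials $X_{1,2}\cdot m$ where $m$ is a monomial of bidegree $(i-1,2i-3)$, and (c) monomials with two or more variables none of which is $X_{1,2}$ but which are otherwise of top type with one second-diagonal factor. The relation $\partial z = 0$ forces, after projecting to the relevant bidegree of $A_{i-1}$, that the coefficients of the type-(a) variables $X_{i,2i-1}$ vanish in homology unless they are ``accounted for'' by lower products — precisely, $\partial X_{i,2i-1}$ is a nonzero cycle (else $X_{i,2i-1}$ would not have been adjoined), and by induction applied in degree $i-1$ together with $H_{i-1,2i-4} = (H^R_{1,2})^{i-3}H^R_{2,3}$ and $H^R_{2,3}$-generation, that cycle is a product of classes of bidegrees $(1,2)$ and $(2,3)$; so the class $[X_{i,2i-1}]$, if it is a cycle at all in the ambient $A$, must already equal such a product up to boundary. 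The surviving homology in bidegree $(i,2i-1)$ is therefore spanned by classes of the form $[X_{1,2}]\cdot H_{i-1,2i-3}$ together with $H^R_{2,3}\cdot H_{i-2,2i-4}$-type terms, and applying the inductive hypothesis $H_{i-1,2i-3} = (H^R_{1,2})^{i-3}H^R_{2,3}$ finishes it.

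The main obstacle I anticipate is the bookkeeping in the inductive step: controlling which monomials in the minimal model can contribute to bidegree $(i,2i-1)$ and showing the differential cannot create ``new'' second-diagonal homology not already coming from products. This requires the Koszul degree bound $j \le 2i-1$ on the variables $X_{i,j}$ (to rule out high-internal-degree variables entering) and a careful comparison of the differential with the multiplication map $H^R_{1,2} \otimes H^R_{i-1,2i-3} \oplus H^R_{2,3}\otimes H^R_{i-2,2i-4} \to H^R_{i,2i-1}$; proving this map is surjective is really the heart, and I expect to deduce it by a diagram chase in $A$ analogous to — but one diagonal below — the argument for $H^R_{i,2i} = (H^R_{1,2})^i$ in \cite[5.1]{AvramovConcaIyengar2}, invoking that result as the base input for the top diagonal.
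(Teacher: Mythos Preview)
Your approach has a genuine gap rooted in using too weak a degree bound on the variables of the minimal model. For a Koszul algebra the correct statement is not merely $j\ls 2i-1$ for $x\in X_{i,j}$, but the equality $\deg(x)=|x|+1$ for \emph{every} variable $x\in X$; this follows from \cite[7.2.6]{Avramov6Lectures} together with the fact that $\ee_{i,j}(R)\ne 0$ forces $j=i$ when $R$ is Koszul. With this in hand the proof is immediate: a monomial $x_1\cdots x_p\in k[X]_{i,2i-1}$ satisfies $2i-1=\sum\deg(x_\ell)=\sum(|x_\ell|+1)=i+p$, so $p=i-1$, and since the $|x_\ell|\gs 1$ sum to $i$ exactly one factor has $|x_\ell|=2$ and the rest have $|x_\ell|=1$. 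Thus $k[X]_{i,2i-1}=(k[X]_{1,2})^{i-2}k[X]_{2,3}$ as vector spaces. Now $\partial(X_{1,2})$ and $\partial(X_{2,3})$ land in $\fm S[X]$ by minimality (there is no room for a quadratic part of $\partial$ in these low degrees), so $X_{1,2}\cup X_{2,3}\subseteq Z(k[X])$; hence the whole graded piece $k[X]_{i,2i-1}$ consists of cycles, the chain of inclusions
$(Z_{1,2})^{i-2}Z_{2,3}\subseteq Z_{i,2i-1}\subseteq k[X]_{i,2i-1}=(k[X]_{1,2})^{i-2}k[X]_{2,3}=(Z_{1,2})^{i-2}Z_{2,3}$
collapses, and passing to homology gives the result. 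No induction and no invocation of \cite[5.1]{AvramovConcaIyengar2} are needed.

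By contrast, your bound $j\ls 2i-1$ (which incidentally fails for $i=1$) would allow phantom variables such as $X_{3,5}$, $X_{4,7}$, etc., and your trichotomy (a)--(c) does not exhaust the monomials in $k[X]_{i,2i-1}$ under that hypothesis. The inductive step is also not sound as written: the phrase ``the coefficients of the type-(a) variables $X_{i,2i-1}$ vanish in homology'' conflates coefficients with homology classes, and the argument that $[X_{i,2i-1}]$ must equal a product of lower classes because $\partial X_{i,2i-1}$ is a cycle representable by products does not follow --- the image $\partial X_{i,2i-1}$ being a boundary in $k[X]$ tells you nothing about the class of $X_{i,2i-1}$ itself. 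In fact, for $i\gs 3$ no variable $X_{i,2i-1}$ exists at all in the Koszul case, so the entire type-(a) analysis is vacuous once the correct degree constraint is used.
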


\begin{proof}
We show first that 
$
k[X]_{i,2i-1}\subseteq \big(k[X]_{1,2}\big)^{i-2}k[X]_{2,3}$.
By the Koszul property and \cite[7.2.6]{Avramov6Lectures} we have $\deg(x)=|x|+1$ for every variable $x\in X$.
For every  monomial $x_{1}\cdots x_{p}\in k[X]_{i,2i-1}$, 
 we have
$$
2i-1=\deg(x_{1}\cdots x_{p}) = \deg(x_{1})+\cdots + \deg(x_{p})
$$
and
$$
i=|x_{1}\cdots x_{p}| = |x_{1}|+\cdots + |x_{p}|=\deg(x_{1})+\cdots + \deg(x_{p})-p
$$
hence $p=i-1$. 
Assume without loss of generality that $|x_j|\ls|x_{j+1}|$ for every $1\ls j\ls i-2$, then $|x_{1}|=\cdots=|x_{i-2}|=1$ and $|x_{i-1}|=2$, 
so the claim follows.

Since the model $S[X]$ is minimal,
we must have
$\partial(X_{1,2})\subseteq \mathfrak{m}S[X]$ and also $\partial(X_{2,3})\subseteq \mathfrak{m}S[X]$, 
because there cannot be a quadratic part in the differential for these low degrees (cf. \cite[7.2.2]{Avramov6Lectures}).
Hence $X_{1,2} \cup X_{2,3}\subseteq Z(k[X])$, the subalgebra of cycles of $k[X]$, therefore we have inclusions
\begin{eqnarray*}
\big(k[X]_{1,2}\big)^{i-2}k[X]_{2,3}&=& \big(Z(k[X])_{1,2}\big)^{i-2}Z(k[X])_{2,3}\subseteq Z(k[X])_{i,2i-1} \\
& \subseteq & k[X]_{i,2i-1} \subseteq \big(k[X]_{1,2}\big)^{i-2}k[X]_{2,3}.
\end{eqnarray*}
We conclude $\big(Z(k[X])_{1,2}\big)^{i-2}Z(k[X])_{2,3}= Z(k[X])_{i,2i-1}$ and the desired statement follows after going modulo $B(k[X])$, 
the ideal of boundaries of $Z(k[X])$.
\end{proof}

\begin{remark}\label{RemarkKoszulHomologyLinearStrand}
By Theorem \ref{KoszulSecondDiag} and \cite[5.1]{AvramovConcaIyengar2}
if $R$ is a Koszul algebra then the  components $H^R_{i,j}$ of the Koszul homology such that $j \gs 2i-1$ are generated by the components of bidegrees $(1,2)$ and $(2,3)$. 
It is natural then to ask whether for Koszul algebras the minimal $k$-algebra generators of
the Koszul homology have bidegrees $(i,i+1)$, corresponding to the linear strand of the Betti table of $R$;
observe that these components are necessarily minimal generators.   This question was raised by Avramov and the answer turns out to be negative:  the first example was discovered computationally by Eisenbud and Caviglia using Macaulay2.  By manipulating this example, Conca and Iyengar were led to consider edge ideals of $n$-cycles.  A family of rings for which this fails is $R=S/I(\mathcal{C}_{3k+1})$ with $k\gs 2$, whose Koszul homology has a minimal algebra generator in bidegree $(2k+1, 3k+1)$  (cf. Theorem \ref{KoszulHomologyPolygons}). 
\end{remark}

Now we turn our attention to the Koszul algebras studied in Section \ref{SectionEdgeIdeals}.
We begin with a well-known fact about resolutions of monomial ideals.

\begin{prop}
Let $I$ be a monomial ideal, $\mathbb{F}$  a multigraded free resolution of $R=S/I$, and $\mathbf{a}\in \mathbb{N}^n$.
Denote by $\mathbb{F}_{\ls \mathbf{a}}$
the subcomplex of $\mathbb{F}$ generated by the standard basis elements of multidegrees $\fv \ls \mathbf{a}$.
Then $\mathbb{F}_{\ls \mathbf{a}}$ is a free resolution of  $S/I_{\ls \mathbf{a}}$, 
where $I_{\ls \mathbf{a}}$ is the ideal of $S$ generated by the elements of $I$ with multidegrees $\fv\ls \mathbf{a}$. 
In particular,  if $I$ is  squarefree
then $\beta_{i,\fv}^S(R)\ne 0$ only for squarefree multidegrees $\fv$.
\end{prop}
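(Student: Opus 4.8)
The plan is to prove the statement by restricting the given multigraded resolution $\mathbb{F}$ to each relevant multidegree and checking exactness strand by strand, then drawing the squarefree consequence. First I would fix the multidegree $\mathbf{a}\in\mathbb{N}^n$ and observe that since $\mathbb{F}$ is multigraded and the differentials are multigraded $S$-linear maps, the $S$-submodule $\mathbb{F}_{\ls\mathbf{a}}$ generated by the standard basis elements of multidegrees $\fv\ls\mathbf{a}$ is indeed a \emph{sub}complex: if $e$ is a basis element of multidegree $\fv\ls\mathbf{a}$, then $\partial(e)$ is an $S$-linear combination $\sum_j s_j e_j$ with $\mdeg(s_j e_j)=\fv$, forcing $\mdeg(e_j)\ls\fv\ls\mathbf{a}$, so $\partial$ maps $\mathbb{F}_{\ls\mathbf{a}}$ into itself. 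Moreover $\mathbb{F}_{\ls\mathbf{a}}$ is visibly a complex of free $S$-modules, being generated by a subset of a free basis.

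Next I would verify exactness. The key point is that for a monomial ideal the whole construction decomposes according to the \emph{support-divisibility} order: for each monomial multidegree $\mathbf{b}$, the degree-$\mathbf{b}$ strand of $\mathbb{F}$ depends only on which basis elements have multidegree dividing $\mathbf{b}$. Concretely, I would argue as follows. Let $J=I_{\ls\mathbf{a}}$. One checks that $\mathbb{F}_{\ls\mathbf{a}}$ in homological degree $0$ is $S$, in degree $1$ it is the free module on the minimal generators of $J$, and the degree-$1$ differential has image exactly $J$; so $\mathrm{H}_0(\mathbb{F}_{\ls\mathbf{a}})=S/J$. For higher homology, fix an arbitrary multidegree $\mathbf{b}$ and compare the degree-$\mathbf{b}$ strand $(\mathbb{F}_{\ls\mathbf{a}})_{\mathbf{b}}$ with $\mathbb{F}_{\mathbf{b}}$. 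If $\mathbf{b}\ls\mathbf{a}$, the two strands coincide and $\mathbb{F}_{\mathbf{b}}$ is exact in positive homological degree (since $\mathbb{F}$ resolves $R$ and $R_{\mathbf b}=0$ in those degrees); if $\mathbf{b}\not\ls\mathbf{a}$ then I claim $(\mathbb{F}_{\ls\mathbf{a}})_{\mathbf{b}}$ is \emph{also} exact, because it equals $(\mathbb{F}_{\ls\mathbf{c}})_{\mathbf{b}}$ for $\mathbf{c}=\mathbf{a}\wedge\mathbf{b}$ (componentwise minimum), and $\mathbf{c}\ls\mathbf{a}$ with $\mathbf{c}\ls\mathbf{b}$ means we are back in the previous case, now resolving $S/I_{\ls\mathbf{c}}$ whose degree-$\mathbf b$ piece again forces exactness except possibly in degree $0$. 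Assembling over all $\mathbf{b}$ gives that $\mathbb{F}_{\ls\mathbf{a}}$ is a free resolution of $S/J=S/I_{\ls\mathbf{a}}$.

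For the final sentence: if $I$ is squarefree, apply the above with $\mathbf{a}$ ranging over all squarefree vectors $\f1_A$ supported on subsets $A\subseteq\{1,\dots,n\}$. A minimal generator of $I$ has squarefree multidegree, hence for $\mathbf{a}$ not squarefree one has $I_{\ls\mathbf{a}}=I_{\ls\mathbf{a}'}$ where $\mathbf{a}'$ is the squarefree vector with the same support; combined with the polarization-type fact that $\beta^S_{i,\fv}(S/I)$ counts generators of $\mathbb{F}$ in multidegree $\fv$, any nonsquarefree $\fv$ occurring would, after restricting $\mathbb{F}$ to $\mathbb{F}_{\ls\fv}$, produce a minimal generator of the free resolution of $S/I_{\ls\fv}$ in a nonsquarefree degree strictly above all the monomial generators, contradicting that the (squarefree) ideal $I_{\ls\fv}$ has a squarefree-supported minimal free resolution concentrated in squarefree multidegrees — equivalently, one invokes the well-known fact (Hochster's formula) that Betti numbers of squarefree monomial ideals live in squarefree degrees.

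The main obstacle I anticipate is the exactness check in multidegrees $\mathbf{b}\not\ls\mathbf{a}$: one must be careful that deleting basis elements of large multidegree does not create spurious cycles. The clean way around this is the observation that the degree-$\mathbf{b}$ strand of $\mathbb{F}_{\ls\mathbf{a}}$ sees only basis elements of multidegree dividing both $\mathbf{a}$ and $\mathbf{b}$, so it is literally the degree-$\mathbf{b}$ strand of $\mathbb{F}_{\ls(\mathbf{a}\wedge\mathbf{b})}$; once phrased this way the verification reduces to the case $\mathbf{b}\ls\mathbf{a}$, which is immediate from exactness of $\mathbb{F}$. Everything else is bookkeeping with the $\mathbb{N}^n$-grading.
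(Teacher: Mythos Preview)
The paper does not prove this proposition; it is stated as ``a well-known fact about resolutions of monomial ideals'' and left without proof. So there is nothing to compare against, and I will just comment on the soundness of your argument.

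Your setup is fine: the subcomplex property and freeness are correct, and the case $\mathbf{b}\ls\mathbf{a}$ is handled correctly since then $(\mathbb{F}_{\ls\mathbf{a}})_{\mathbf{b}}=\mathbb{F}_{\mathbf{b}}$. The gap is in the case $\mathbf{b}\not\ls\mathbf{a}$. You correctly observe that $(\mathbb{F}_{\ls\mathbf{a}})_{\mathbf{b}}=(\mathbb{F}_{\ls\mathbf{c}})_{\mathbf{b}}$ with $\mathbf{c}=\mathbf{a}\wedge\mathbf{b}$, but then you assert exactness by saying that $\mathbb{F}_{\ls\mathbf{c}}$ resolves $S/I_{\ls\mathbf{c}}$. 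That is exactly the statement you are trying to prove (with $\mathbf{c}$ in place of $\mathbf{a}$), so the argument is circular. An induction on $\mathbf{a}$ does not save this: when $\mathbf{a}\ls\mathbf{b}$ one has $\mathbf{c}=\mathbf{a}$, and you have made no progress at all.

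The clean fix is to compare $(\mathbb{F}_{\ls\mathbf{a}})_{\mathbf{b}}$ not with the degree-$\mathbf{b}$ strand of another truncation, but with the degree-$\mathbf{c}$ strand of the same one. Multiplication by the monomial $T^{\mathbf{b}-\mathbf{c}}$ gives a chain map $(\mathbb{F}_{\ls\mathbf{a}})_{\mathbf{c}}\to(\mathbb{F}_{\ls\mathbf{a}})_{\mathbf{b}}$; it is an isomorphism of complexes of $k$-vector spaces because a basis element $e$ of multidegree $\fv$ contributes to either strand precisely when $\fv\ls\mathbf{c}$, and in that case it contributes a one-dimensional piece $k\cdot T^{\mathbf{c}-\fv}e$ (resp.\ $k\cdot T^{\mathbf{b}-\fv}e$). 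Since $\mathbf{c}\ls\mathbf{a}$, your Case~1 gives $(\mathbb{F}_{\ls\mathbf{a}})_{\mathbf{c}}=\mathbb{F}_{\mathbf{c}}$, which is exact in positive degrees; hence so is $(\mathbb{F}_{\ls\mathbf{a}})_{\mathbf{b}}$. The same isomorphism, together with the observation that a squarefree generator of $I$ has multidegree $\ls\mathbf{b}$ iff it has multidegree $\ls\mathbf{c}$, handles $H_0$.

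For the squarefree consequence, your appeal to Hochster's formula is of course valid but somewhat external. A more self-contained route, using what you have just proved: take $\mathbb{F}$ minimal, let $\fv$ be non-squarefree with $v_j\gs 2$, and set $\fv'=\fv-\mathbf{e}_j$. Since $I$ is squarefree, $I_{\ls\fv}=I_{\ls\fv'}$, so $\mathbb{F}_{\ls\fv}$ and $\mathbb{F}_{\ls\fv'}$ are both minimal free resolutions of the same module; the quotient complex is then exact, free, bounded below with differentials in $\fn$, hence zero. Thus there are no basis elements of multidegree $\fv$.
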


Next we introduce some notation for the decomposition of squarefree vectors into  intervals and cyclic intervals (cf. Section \ref{SectionEdgeIdeals}).

\begin{definition}\label{blocks}
Let $\fv\in \mathbb{N}^n$ be a squarefree vector.
There exists a  unique minimal (with respect to cardinality) set of vectors 
$\{\fv_1,\ldots,\fv_{\tau(\fv)}\}\subset \NN^n$
 such that 
   $\Supp(\fv_j)$ is an interval for each $j$ and
$\fv=\sum_{j=1}^{\tau(\fv)}\fv_j$.   
By minimality  $\Supp(\fv_i+\fv_j)$ is not an interval if $i\neq j$. 
Define further $\iota(\fv)=\sum_{j=1}^{\tau(\fv)}\left \lfloor\frac{2\|\fv_j\|}{3}\right\rfloor$.
For example, let $\fv = (1,1,0,0,0,1,1,0,1)$ then the set is 
$$
\left\{(1,1,0,0,0,0,0,0,0),(0,0,0,0,0,1,1,0,0),(0,0,0,0,0,0,0,0,1)\right\}
$$
and we have  $\tau(\fv)=3$, $\iota(\fv)=1+1+0=2$.

Likewise, 
given a squarefree vector  $\fw\in \mathbb{N}^n$
there exists a  unique minimal set of vectors 
$\{\fw_1,\ldots,\fw_{\tilde{\tau}(\fw)}\}$
 such that 
   $\Supp(\fw_j)$ is a cyclic interval for each $j$ and
$\fw=\sum_{j=0}^{\tilde{\tau}(\fw)}\fw_j$;  
it follows that $\Supp(\fw_i+\fw_j)$ is not a cyclic interval if $i\neq j$. 
Set $\tilde{\iota}(\fw)=\sum_{j=1}^{\tilde{\tau}(\fw)} \left\lfloor\frac{2\|\fw_j\|}{3}\right\rfloor$.
For example,
let $\fw = (1,1,0,0,0,1,1,0,1)$ then the set is 
$$
\left\{(1,1,0,0,0,0,0,0,1), (0,0,0,0,0,1,1,0,0)\right\}
$$
and we have  $\tilde{\tau}(\fw)=2$, $\tilde{\iota}(\fw)=2+1=3$.
\end{definition}

Using Jacques' results \cite{Jacques}, these definitions allow us to describe completely the multigraded Betti numbers of $S/\pat{n}$ and $S/\cyc{n}$. 

\begin{prop}\label{bettionlyones}
Let $\fv$ and $\fw$ be squarefree vectors. 
Following Definition \ref{blocks}, we have:
\begin{enumerate}
\item[(a)] 
$
\beta_{i,\fv}^S(S/\pat{n})
=
1 $ if $
 \|\fv_j\|\not\equiv 1\pmod{3}$ for  $1\ls j\ls \tau(\fv)$ and $i=\iota(\fv)$;\\
 \noindent
$\beta_{i,\fv}^S(S/\pat{n})
=0$
otherwise.

\item[(b)] Assume $\fw\ne \f1_n$, then
\\ 
\noindent
$
\beta_{i,\fw}^S(S/\cyc{n})
=
1$ if $\|\fw_j\|\not\equiv 1\pmod{3}$ for $1\ls j\ls \tilde{\tau}(\fw)$ and $ i=\tilde{\iota}(\fw)$;\\
$\beta_{i,\fw}^S(S/\cyc{n})
=0$
otherwise.
 
\item[(c)] 

$
\beta^S_{i,\f1_n}(S/\cyc{n})
=1 $ if $n \equiv 1 \pmod{3}$ and $ i = \lceil \frac{2n}{3} \rceil$ or $n \equiv 2 \pmod{3}$ and $i = \tilde{\iota}({\f1_n})$;\\
$
\beta^S_{i,\f1_n}(S/\cyc{n})
= 
2 $ if $n \equiv 0 \pmod{3}$ and $ i = \tilde{\iota}({\f1_n})$;\\
$
\beta^S_{i,\f1_n}(S/\cyc{n})
= 
0$
otherwise.
\end{enumerate}

\end{prop}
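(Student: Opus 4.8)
The plan is to reduce an arbitrary multigraded Betti number of $S/\pat{n}$ or $S/\cyc{n}$ to the ``top'' Betti number in the all-ones multidegree $\f1_m$ of a path or cycle on $m\ls n$ vertices, and then to read off the latter from Jacques' computation~\cite{Jacques}. First I would use that $\pat{n}$ and $\cyc{n}$ are squarefree: by the preceding proposition $\beta_{i,\fv}^S$ vanishes unless $\fv$ is squarefree, and $\Tor^S_i(S/I,k)_\fv$ is computed by the restriction of a minimal resolution to $\Supp(\fv)$, so $\beta_{i,\fv}^S(S/\pat{n})=\beta_{i,\fv}^S\big(S/(\pat{n})_{\ls\fv}\big)$ and $(\pat{n})_{\ls\fv}$ is the edge ideal of the subgraph of $\P_n$ induced on $\Supp(\fv)$. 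With the notation of Definition~\ref{blocks}, that induced subgraph is the disjoint union of the paths $\P_{\|\fv_j\|}$ on the pairwise disjoint vertex sets $\Supp(\fv_j)$; the same holds for $\cyc{n}$ and a squarefree $\fw\ne\f1_n$, because any coordinate $w_\ell=0$ cuts the cycle open and the cyclic-interval blocks $\fw_1,\dots,\fw_{\tilde\tau(\fw)}$ are precisely the path components of the resulting restricted graph. Since the edge ideal of a disjoint union of graphs is a sum of ideals in disjoint sets of variables, its minimal free resolution is the tensor product over $k$ of the minimal resolutions of the components, and --- using also that $\f1_m$ is the only squarefree vector of $1$-norm $m$, so that the top multigraded Betti number coincides with the graded Betti number $\beta_{i,m}^S$ --- the Betti numbers convolve:
\begin{equation*}
\beta_{i,\fv}^S(S/\pat{n})=\sum_{i_1+\cdots+i_{\tau(\fv)}=i}\;\prod_{j=1}^{\tau(\fv)}\beta_{i_j,\f1_{m_j}}^S\big(S/\pat{m_j}\big),\qquad m_j:=\|\fv_j\|,
\end{equation*}
and likewise for $\cyc{n}$ with $\fw\ne\f1_n$ in terms of the cyclic-interval blocks.

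Next I would feed in the single-block input, which is exactly Jacques' computation via Hochster's formula: $\beta_{i,\f1_m}^S(S/\pat{m})=\dim_k\widetilde{H}_{m-i-1}(\operatorname{Ind}(\P_m);k)$, where $\operatorname{Ind}(\P_m)$ is contractible when $m\equiv 1\pmod 3$ and homotopy equivalent to a single sphere $S^{\lceil m/3\rceil-1}$ otherwise. Hence $\beta_{i,\f1_m}^S(S/\pat{m})$ equals $1$ when $m\not\equiv 1\pmod 3$ and $i=m-\lceil m/3\rceil=\lfloor 2m/3\rfloor$, and equals $0$ in all other cases. Substituting into the convolution formula, the product is nonzero only if every $\|\fv_j\|\not\equiv 1\pmod 3$, in which case exactly one term survives, it equals $1$, and it lies in homological degree $\sum_j\lfloor 2\|\fv_j\|/3\rfloor=\iota(\fv)$; this is (a), and (b) follows identically from the cyclic-interval version of the convolution formula together with the same path computation.

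For (c) I would take $\Supp(\fw)=\{1,\dots,n\}$, so that the induced subgraph is $\C_n$ itself and $\beta_{i,\f1_n}^S(S/\cyc{n})=\dim_k\widetilde{H}_{n-i-1}(\operatorname{Ind}(\C_n);k)$. Here $\operatorname{Ind}(\C_n)$ is homotopy equivalent to $S^{k-1}\vee S^{k-1}$ if $n=3k$, to $S^{k-1}$ if $n=3k+1$, and to $S^{k}$ if $n=3k+2$, so reading off the unique nonzero reduced homology group gives $\beta_{i,\f1_n}^S(S/\cyc{n})$ equal to $2$ exactly for $i=2k=\tilde\iota(\f1_n)$ when $n=3k$, to $1$ exactly for $i=2k+1=\lceil 2n/3\rceil$ when $n=3k+1$, to $1$ exactly for $i=2k+1=\tilde\iota(\f1_n)$ when $n=3k+2$, and to $0$ otherwise --- that is (c).

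The only genuinely nontrivial ingredient is the determination of the homotopy type --- equivalently, the top-degree graded Betti number --- of $\operatorname{Ind}(\P_m)$ and $\operatorname{Ind}(\C_n)$; everything else is bookkeeping with the block decomposition and the Künneth formula. For this step I would either quote Jacques' explicit binomial formulas for these graded Betti numbers directly, or reprove the homotopy types by the standard inductive ``fold'' argument for independence complexes: deleting a vertex whose closed neighborhood contains that of one of its neighbors is a homotopy equivalence, which peels three consecutive vertices off a path (up to a suspension) and reduces a cycle to a path.
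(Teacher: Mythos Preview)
Your proposal is correct and follows essentially the same strategy as the paper: restrict to the induced subgraph on $\Supp(\fv)$ using the preceding proposition, decompose as a disjoint union of paths whose resolutions tensor together, and then read off the single-block Betti numbers from Jacques. The only cosmetic differences are that the paper justifies the tensor-product step by an explicit induction using $\Tor_1^S(S/J_1,S/J_2)=0$ and rigidity of Tor (where you simply invoke the standard fact for ideals in disjoint variable sets), while you conversely unpack Jacques' input via Hochster's formula and the homotopy types of $\operatorname{Ind}(\P_m)$ and $\operatorname{Ind}(\C_n)$, which the paper cites as a black box.
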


\begin{proof}
Let $\FF$ be a minimal multigraded free resolution of $S/\pat{n}$. 
We prove by induction on $j$ that 
$$
\FF_{\ls\sum_{i=1}^{j}\fv_i}\cong \bigotimes_{i=1}^{j}\FF_{\ls\fv_i}.
$$ 
If $j=1$, this is trivial. 
Assume $j\gs 2$ and let $J_1=\pat{n}_{\ls \fv_j}$ and $J_2=\displaystyle\sum_{i=1}^{j-1}\pat{n}_{\ls \fv_i}$. 
Since $\Supp(\fv_j)\cap \bigcup_{i=1}^{j-1}\Supp(\fv_i)=\emptyset$ and $J_1$ and $J_2$ are monomial ideals, 
we have $0=J_1\cap J_2/J_1J_2\cong \Tor_1^S(S/J_1,S/J_2)$.
By rigidity of Tor and induction hypothesis, we conclude 
\begin{eqnarray*}
S/\sum_{i=1}^{j} \pat{n}_{\ls \fv_i}&=&S/(J_1+J_2)\cong \Tor^S(S/J_1,S/J_2)=H\Big(\FF_{\ls \fv_j}\otimes_S \bigotimes_{i=1}^{j-1}\FF_{\ls\fv_i}\Big)\\
&=&H\Big(\bigotimes_{i=1}^{j}\FF_{\ls\fv_i}\Big)
\end{eqnarray*}
which proves the claim, since minimal free resolutions are unique up to isomorphism of complexes (and the entries of the differential maps in $\FF_{\ls\sum_{i=1}^{j}\fv_i}$ and $\bigotimes_{i=1}^{j}\FF_{\ls\fv_i}$ lie in the maximal ideal of $S$ by construction). 
In particular, $\FF_{\ls\fv}\cong\bigotimes_{i=1}^{\tau(\fv)}\FF_{\ls\fv_i}$. 
Since $\FF_{\ls\fv_i}$ is a free resolution of $S/\pat{n}_{\ls \fv_i}\cong S/\pat{\|\fv_i\|}$ for every $i$, part (a) follows by \cite[7.7.34, 7.7.35]{Jacques}. 
The proof of part (b) is analogous and part (c) is a direct consequence of \cite[7.6.28, 7.7.34]{Jacques}. 
\end{proof}

Since the variables of the models $S[X]$ and $S[\widetilde{X}]$ with squarefree multidegrees play a crucial role in this section, 
we introduce here a suitable notation for them.

\begin{definition}\label{variables}
Let $S[X]$ be a minimal model of $S/\pat{n}$. 
 Proposition \ref{PropositionSquarefreeDeviations} and  \cite[Lemma 5]{Berglund}
 determine the subset of $X$ consisting of variables of squarefree multidegrees:
for every $\fv \ls \f1_n $ we have $X_{i,\fv}\ne\emptyset$ if and only if $i=\|\fv\|-1$
and $\Supp(\fv)$ is an interval,
and in this case  $\Card(X_{\|\fv\|-1,\fv})=1$. 
For this reason,
given distinct $p,q\in \{1, \ldots,n\}$  with $p<q$, 
let $x_{p,q}$ denote the only element of  $X_{\|\fv_{p,q}\|-1,\fv_{p,q}}$ where $\fv_{p,q}$ is the squarefree vector with $\Supp(\fv_{p,q})=\{p,p+1,\ldots, q\}$.
Notice $|x_{p,q}|=q-p$. 
We also denote by $x_{i,i}$ the variable $T_i$ for each $i=1, \ldots, n$.

 Let $S[\widetilde{X}]$ be a minimal model of $S/\cyc{n}$. 
Likewise, for every $\fv \ls \f1_n $, we have $\widetilde{X}_{i,\fv}\ne\emptyset$ if and only if $i=\|\fv\|-1$
and $\Supp(\fv)$ is a cyclic interval. 
Moreover, $\Card(\widetilde{X}_{\|\fv\|-1,\fv})=1$ if $\fv\ne \f1_n$ and $\Card(\widetilde{X}_{n-1,\f1_n})=n-1$. 
Given distinct $p,q\in \{1, \ldots,n\}$  with $p\not\equiv q+1\pmod{n}$, 
let $\wx_{p,q}$ denote  the only element of  $\widetilde{X}_{\|\fv_{p,q}\|-1,\fv_{p,q}}$ where $\fv_{p,q}$ denotes the squarefree vector with $\Supp(\fv_{p,q})=\{1,2,\ldots, q,p,p+1,\ldots,n\}$ if $q<p$, notice $|\wx_{p,q}|=q-p$ if $p<q$ and $|\wx_{p,q}|=n-(p-q)$ if $p>q$. 
Similarly, for each $i=1, \ldots, n$ denote by $\wx_{i,i}$ the variable $T_i$.

With an abuse of notation, 
we  denote also by $x_{p,q}$ the image of $x_{p,q} \in S[X]$ in $k[X]= k \otimes_S S[X]$,
and similarly by $\tilde{x}_{p,q}$ the image of $\tilde{x}_{p,q} \in S[\tilde{X}]$ in $k[\tilde{X}]$.
\end{definition}

\begin{example}
Let $n=7$. According to the notation just introduced we have
$$\{T_1,T_2,\ldots,T_7\}=\{x_{1,1},x_{2,2},\ldots, x_{7,7}\}=\{\wx_{1,1},\wx_{2,2},\ldots, \wx_{7,7}\},$$
$$X_{3, (0,0,1,1,1,1,0)}=\{x_{3,6}\}, \,\,\, \widetilde{X}_{3, (0,0,1,1,1,1,0)}=\{ \wx_{3,6}\}, \,\,\, \widetilde{X}_{4,(1,1,1,0,0,1,1)}=\{ \wx_{6,3} \}.$$
 \end{example}

In the next proposition we find formulas for the differential of variables with squarefree multidegree. Note that, once a multidegree $\boldsymbol\alpha$ is fixed, one can run a partial Tate process killing only cycles with multidegree bounded by $\boldsymbol\alpha$. 
The DG algebra $S[X^{\ls \boldsymbol\alpha}]$ obtained this way can be extended to a minimal model $S[X]$ of $R$ such that $X_{i, \boldsymbol\beta} = X^{\ls \boldsymbol\alpha}_{i, \boldsymbol\beta}$ for all $i > 0$, $\boldsymbol\beta \ls \boldsymbol\alpha$ componentwise. 
We apply the above strategy to compute variables with squarefree multidegree. 
We will sometimes denote the set of these variables by $X^{\text{sf}}$.

\begin{prop}\label{PropositionDifferentialSquarefreeVariable}
Following Definition \ref{variables} we have:
\begin{enumerate}
\item[(a)] There exists a minimal model $S[X]$  of $S/\pat{n}$ such that for every $p<q$
$$\partial(x_{p,q}) = \sum_{r\in\Supp(\fv_{p,q})\setminus\{q\}} (-1)^{|x_{p,r}|} x_{p,r}x_{r+1,q}.$$

\item[(b)] There exists a minimal model $S[\widetilde{X}]$  of $S/\cyc{n}$ such that for every $p\not\equiv q+1\pmod{n}$ 
\[
\partial(\wx_{p,q}) = \sum_{r\in\Supp(\fv_{p,q})\setminus\{q\}} (-1)^{|\wx_{p,r}|} \wx_{p,r}\wx_{r+1,q}\] where $\wx_{n+1,q}:=\wx_{1,q}$, and $\widetilde{X}_{n-1, \f1_n} = \{w_1, \ldots, w_{n-1}\}$ with
\[\partial(w_i) = \sum_{r=i}^{n+i-2}(-1)^{r-i}\wx_{i,r}\wx_{r+1,n+i-1}\]
where $\wx_{p, q}:=\wx_{p',q'}$ if $p \equiv p' \pmod n$, $q \equiv q' \pmod n$ and $1 \ls p', q' \ls n$.
\end{enumerate}
\end{prop}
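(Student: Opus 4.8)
The key observation is that the formulas proposed in part (a) and (b) mimic exactly the differential of the Koszul complex / bar-type construction on intervals, so I expect the proof to be an induction on the homological degree $|x_{p,q}| = q - p$ (equivalently on $\|\fv_{p,q}\|$), carried out multidegree by multidegree using the partial Tate process described just before the statement. The base case $|x_{p,q}| = 1$ is the Koszul relation: $x_{p,q}$ kills $\partial(x_{p,p}x_{p+1,p+1}) = T_pT_{p+1} = \pm$ (a generator of the edge ideal), so $\partial(x_{p,q})$ is (up to sign) $x_{p,p}x_{p+1,p+1}$, which agrees with the displayed formula since the sum over $r \in \{p\}$ has one term. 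For $|x_{p,q}| \geq 2$, I would assume the formula holds for all variables of strictly smaller homological degree (this makes sense because such variables all have multidegree $\fw \ls \fv_{p,q}$ with $\|\fw\| < \|\fv_{p,q}\|$, and are already determined by the partial model $S[X^{\ls \fv_{p,q}}]$).

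**The inductive step.** I would define the candidate cycle $z := \sum_{r} (-1)^{|x_{p,r}|} x_{p,r} x_{r+1,q}$ and check two things: first, that $\partial(z) = 0$, and second, that $[z]$ is a \emph{nonzero} class generating $H_{|x_{p,q}|}(S[X^{< \fv_{p,q}}])$ minimally in multidegree $\fv_{p,q}$ — by Proposition~\ref{PropositionSquarefreeDeviations}(a) this homology is one-dimensional in that multidegree, so once I know $z$ is a nonzero cycle I may adjoin a single variable $x_{p,q}$ with $\partial(x_{p,q}) = z$, which is precisely the assertion. Verifying $\partial(z) = 0$ is a direct computation via the Leibniz rule: expanding $\partial\!\big(\sum_r (-1)^{|x_{p,r}|} x_{p,r}x_{r+1,q}\big)$ and substituting the inductive formulas for $\partial(x_{p,r})$ and $\partial(x_{r+1,q})$ produces a double sum of triple products $x_{p,s}x_{s+1,t}x_{t+1,q}$ (over $p \leq s < t \leq q-1$, roughly), and the associativity bookkeeping of the signs $(-1)^{|x_{p,s}|}$, $(-1)^{|x_{s+1,t}|}$ makes the terms cancel in pairs — this is the standard "square-zero" check for a simplicial/Koszul-type differential. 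I would only sketch this, noting the sign pattern $(-1)^{q-p}$ arising from $|x_{p,r}| + |x_{r+1,q}| = q - p - 1$.

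**Nonvanishing and the $\f1_n$ case.** To see $[z] \neq 0$: since $\dim_k H_{|x_{p,q}|, \fv_{p,q}}\big(S[X^{<\fv_{p,q}}]\big) = \ee_{\fv_{p,q}} = 1$ by Proposition~\ref{PropositionSquarefreeDeviations}(a) and the minimal-model construction, it suffices to exhibit one reason $z$ is not a boundary — e.g. the leading term $x_{p,p}x_{p+1,q}$ (with $r = p$) involves a degree-$1$ variable times a variable that itself is a Tate generator, and no product of lower-degree generators in this multidegree can hit it, since boundaries in multidegree $\fv_{p,q}$ land in the span of $\partial(X_{|x_{p,q}|+1, \fv_{p,q}}) = 0$ at this stage of the process. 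Alternatively, and more robustly, one invokes that the partial model is minimal so $\partial(X) \subseteq \fm S[X]$, forcing every adjoined variable's boundary to be a nonzero minimal cycle, and then matches ranks against Proposition~\ref{PropositionSquarefreeDeviations}. For part (b), the cyclic-interval variables $\wx_{p,q}$ with $p \not\equiv q+1 \pmod n$ are handled identically (the support wraps around, but the interval combinatorics are the same), and the genuinely new piece is $\widetilde{X}_{n-1,\f1_n}$: here $\ee_{\f1_n} = n-1$ by Proposition~\ref{PropositionSquarefreeDeviations}(b), so I must adjoin $n-1$ variables $w_1, \ldots, w_{n-1}$ and exhibit $n-1$ cycles whose classes form a basis of the $(n-1)$-dimensional homology $H_{n-1, \f1_n}$. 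The proposed cycles $\sum_{r=i}^{n+i-2}(-1)^{r-i}\wx_{i,r}\wx_{r+1,n+i-1}$ are the "broken-cycle" elements, one for each choice of which edge $\{i, i+1\}$ is omitted as a starting point; I would check each is a cycle by the same Leibniz computation and check their classes are independent by noting they have distinct leading monomials $\wx_{i,i}\wx_{i+1,n+i-1}$ (i.e. $T_i$ times a generator in a full cyclic interval missing vertex... ), hence are linearly independent modulo boundaries.

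**Main obstacle.** The real work — and where I would be careful rather than cavalier — is the \emph{sign bookkeeping} in the $\partial(z) = 0$ verification, especially for the $\f1_n$ generators where the indices are read modulo $n$ and one must confirm the convention $\wx_{n+1,q} := \wx_{1,q}$ is consistent with the signs; and second, pinning down that the $n-1$ chosen $w_i$'s really span (and don't overcount) $H_{n-1,\f1_n}$, i.e. that there is exactly one linear relation among the $n$ natural broken-cycle elements $\{w_i\}_{i=1}^n$ — this matches $\beta^S_{\lceil 2n/3\rceil \text{ or } \tilde\iota(\f1_n), \f1_n}$ from Proposition~\ref{bettionlyones}(c) only after the degrees line up, so I would cross-check the homological degree $n-1$ against $\tilde\iota(\f1_n)$ case-by-case mod $3$. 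Everything else is a routine, if lengthy, induction.
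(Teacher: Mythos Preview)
Your approach is correct and close in spirit to the paper's, but the direction of the inductive step in part~(a) is reversed. You propose to \emph{define} the candidate cycle $z = \sum_r (-1)^{|x_{p,r}|} x_{p,r}x_{r+1,q}$, verify $\partial(z)=0$ by expanding via Leibniz and the inductive formulas, observe that $z$ cannot be a boundary (the partial model has no elements of homological degree $q-p$ in multidegree $\fv_{p,q}$, since such an element would have to be a single variable $x_{p,q}$ not yet adjoined), and then adjoin $x_{p,q}$ with $\partial(x_{p,q})=z$. The paper instead starts from an arbitrary minimal model, notes that degree and multidegree constraints force $\partial(x_{p,q}) = \sum_r \lambda_r\, x_{p,r}x_{r+1,q}$ for some unknown $\lambda_r\in k$, and then \emph{solves} for the $\lambda_r$ by imposing $\partial^2(x_{p,q})=0$: comparing coefficients of $x_{p,p}x_{p+1,u}x_{u+1,q}$ yields $\lambda_u=(-1)^{u-p}\lambda_p$, and minimality gives $\lambda_p\ne 0$, so one normalizes $\lambda_p=1$. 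The underlying double sum of triple products is identical; your version verifies it vanishes for one choice of signs, while the paper's shows that choice is forced up to scalar. Both are valid; the paper's route makes explicit that no other differential is possible.

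For the $\f1_n$ case in part~(b) your treatment coincides with the paper's: exhibit the $n-1$ cycles, check linear independence via distinguished monomials (you use $T_i\,\wx_{i+1,n+i-1}$, the paper uses $\wx_{j,n-1}\wx_{n,j-1}$), and note there are no nonzero boundaries in that bidegree. Your closing worry about cross-checking the homological degree $n-1$ against $\tilde\iota(\f1_n)$ case-by-case mod~$3$ is a confusion you should drop: the $w_i$ sit in homological degree $n-1$ simply because $\Card(\widetilde X_{i,\fv})=\ee_{i+1,\fv}$ and $\ee_{n,\f1_n}=n-1$; the quantity $\tilde\iota(\f1_n)=\lfloor 2n/3\rfloor$ lives in the Betti table of $R$, i.e.\ in $H(k[\widetilde X])$, not among the variables of $k[\widetilde X]$. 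No mod-$3$ analysis is needed for this proposition.
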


\begin{example}
If $n=7$ then we have the following differentials   
\begin{eqnarray*}
\partial(x_{1,1}) & =&\partial(\wx_{1,1})= 0,\\
\partial(x_{1,2}) & =&\partial(\wx_{1,2})=  T_1 T_2,\\
\partial(x_{1,4}) & =& T_1 x_{2,4} - x_{1,2}x_{3,4} + x_{1,3}T_4,\\
\partial(\wx_{1,4})&=& T_1 \wx_{2,4} - \wx_{1,2}\wx_{3,4} + \wx_{1,3}T_4,\\
\partial(\wx_{4,1}) & = &T_4 \wx_{5,1} - \wx_{4,5}\wx_{6,1} + \wx_{4,6}\wx_{7,1}-\wx_{4,7}T_1.\\
\partial( w_1) & = & T_1 \wx_{2,7} - \wx_{1,2}\wx_{3,7} + \wx_{1,3}\wx_{4,7}-\wx_{1,4}\wx_{5,7}+\wx_{1,5}\wx_{6,7}-\wx_{1,6}T_7.
\end{eqnarray*}

\end{example}

\begin{proof}[Proof of Proposition \ref{PropositionDifferentialSquarefreeVariable}]
$\,$\\
(a)
We proceed by induction on $|x_{p,q}|=q-p$. 

If $q-p=1$, since $|\partial(x_{p,p+1})|=0$ and $\mdeg(\partial(x_{p,p+1}))=\fv_{p,p+1}$, we can assume $\partial(x_{p,p+1})=T_pT_{p+1}$. 
Now assume $q-p>1$;
 since $|\partial(x_{p,q})|=q-p-1$ and $\mdeg(\partial(x_{p,q}))=\fv_{p,q}$, we have
$$
\partial(x_{p,q}) = \sum_{r=p}^{q-1}\lambda_r x_{p,r}x_{r+1,q}
$$ 
for some $\lambda_r \in k$, and then by the Leibniz rule
\begin{equation}\label{diffs}
0=\partial^2(x_{p,q}) = \sum_{r=p}^{q-1}\lambda_r \big[
\partial(x_{p,r})x_{r+1,q}+
(-1)^{r-p}
x_{p,r}\partial(x_{r+1,q})  
\big].
\end{equation}
By induction hypothesis we can rewrite the RHS of Equation \ref{diffs} as
\begin{eqnarray*}
 \sum_{r=p}^{q-1}\lambda_r \left[ 
\Big(
\sum_{s=p}^{r-1} (-1)^{s-p} x_{p,s}x_{s+1,r}
\Big)
x_{r+1,q}
+
(-1)^{r-p}
x_{p,r}
\Big(
\sum_{t=r+1}^{q-1} (-1)^{t-r-1} x_{r+1,t}x_{t+1,q}
\Big)  
\right].
\end{eqnarray*}
Since the set of monomials in a given multidegree is linearly independent, the coefficient of each monomial must be 0.
For fixed $p<u<q$, the monomial $x_{p,p}x_{p+1,u}x_{u+1,q}$ appears in the first sum when $r=u, s=p$ and in the second sum when $r=p, t=u$, hence
$$
\lambda_u(-1)^{p-p}+ \lambda_p (-1)^{p-p}(-1)^{u-p-1} =0.
$$
Therefore, $\lambda_u =(-1)^{u-p} \lambda_p$, for every $p\ls u\ls q-1$. Since $S[X]$ is a minimal model we have $\lambda_p\ne0$, 
thus we can assume $\lambda_p=1$ and the conclusion follows.

(b)
By the proof of part (a) we know there exists a DG algebra satisfying the first part of the claim: let $S[\widetilde{X}^{\text{sf}}_{\ls n-2}]$ be this DG algebra, which is obtained by adding to $S$ all the variables $\wx_{p, q}$ where $p\not\equiv q+1\pmod{n}$.
We now show a possible choice of the variables in homological degree $n-1$ and internal multidegree $\f1_n$.
For $i \in \{1, \ldots, n-1\}$ let
\[
z_i = \sum_{r=i}^{n+i-2}(-1)^{r-i}\wx_{i,r}\wx_{r+1,n+i-1}.
\]
One checks easily that each $z_i$ is a cycle in $S[\widetilde{X}^{\text{sf}}_{\ls n-2}]$. Moreover, the $z_i$'s are linearly independent over $k$, since the monomial $\wx_{j, n-1}\wx_{n, j-1}$ appears only in $z_j$ for any $j \in \{1, \ldots, n-1\}$.
Note that there exists no nonzero boundary of $S[\widetilde{X}^{\text{sf}}_{\ls n-2}]$ having homological degree $n-2$ and internal multidegree $\f1_n$: if $b$ were such a boundary, then there would exist $w$ in $S[\widetilde{X}^{\text{sf}}_{\ls n-2}]$ having homological degree $n-1$, internal multidegree $\f1_n$ and such that $\partial(w) = b$. Since $\mdeg(\wx_{p, q})=\fv_{p, q}$ and $|\wx_{p, q}| = \Card(\text{Supp}(\fv_{p, q}))-1$, no such $w$ can be obtained as a linear combination of products of some $\wx_{p, q}$'s (the objects obtained that way and having multidegree $\f1_n$ must have homological degree at most $n-2$).
Let $cls(z_i)$ be the homology class of $z_i$. We now claim that \[H_{n-1, \f1_n}(S[\widetilde{X}^{\text{sf}}_{\ls n-2}]) = \langle cls(z_1), \ldots, cls(z_{n-1})\rangle.\] Since $\ee_{n-1, \f1_n} = n-1$ by Proposition \ref{PropositionSquarefreeDeviations}, it suffices to prove that the homology classes of the $z_i$'s are minimal generators of $\langle cls(z_1), \ldots, cls(z_{n-1})\rangle$.
Suppose $z_i - \sum_{j \neq i}\mu_jz_j$ equals a boundary $b$ for some $\mu_j \in S[\widetilde{X}^{\text{sf}}_{\ls n-2}]$: since all $z_i$'s have homological degree $n-2$ and  multidegree $\f1_n$, we can suppose the $\mu_j$'s all lie in $k$ and $b$ is homogeneous of multidegree $\f1_n$. Since such a boundary is forced to be zero and the $z_i$'s are $k$-linearly independent, we get a contradiction.
\end{proof}

Next we introduce a compact way to denote  monomials of $S[X]$ and $S[\widetilde{X}]$ with squarefree multidegrees. 
As for the variables $x_{p,q}$ and $\wx_{p,q}$,
we use the same symbol to denote monomials in $S[X]$ (resp. $ S[\widetilde{X}]$) and their images in
$k[X]$ (resp. $k[\widetilde{X}]$).

\begin{definition}\label{monomials}
Let $S[X]$ and $S[\widetilde{X}]$ be as in Proposition \ref{PropositionDifferentialSquarefreeVariable}. 
Given $N\in \NN$ and a pair of sequences of natural numbers $P=\{p_i\}_{i=1}^N$ and $Q=\{q_i\}_{i=1}^N$ such that $1\ls p_1<q_1\ls n$ and $q_i<p_{i+1}<q_{i+1}\ls n$ for each $i\ls N-1$,
 we consider the monomial of $k[X]$
$$
\mathcal{B}_{P,Q}=\prod_{i=1}^{N}x_{p_i,q_i}.
$$
Similarly, given  $P=\{p_i\}_{i=1}^N$ and $Q=\{q_i\}_{i=1}^N$ such that $1\ls p_1<q_1<p_1+n-1<2n$ and $q_i<p_{i+1}<q_{i+1}< p_1+n$ for each $i\ls N-1$,
 we consider the monomial of $k[\widetilde{X}]$
$$
\widetilde{\mathcal{B}}_{P,Q}=\prod_{i=1}^{N}\wx_{p_i,q_i},
$$
where if $p_i>n$ or $q_i>n$ we set
$\wx_{p_i,q_i}:=\wx_{p'_i,q'_i}$ 
with
$p'_i\equiv p_i, q'_i \equiv q_i \pmod{n}$ and $1 \ls p'_i, q'_i \ls n$.

For each pair of sequences $(P,Q)$ as above define 
$$
\Gamma_{P,Q}=\{i>1\,:\,p_i=q_{i-1}+1\}
$$ 
and for each $i\in \Gamma_{P,Q}$ denote by $P(i)$ and $Q(i)$ the sequences of $N-1$ elements obtained by deleting $p_i$ from $P$ and $q_{i-1}$ from $Q$, respectively.
\end{definition}

\begin{example}\label{sequences}
Let $n=16$, $N=5$, $P=\{1,4, 7, 11,14\}$ and $Q=\{3,5,9,13,15\}$. Then 
$$
\mathcal{B}_{P,Q}=x_{1,3}x_{4,5}x_{7,9}x_{11,13}x_{14,15}.
$$ 
In this case, $\Gamma_{P,Q}=\{2,5\}$, $P(2)=\{1, 7, 11,14\}$, $Q(2)=\{5,9,13,15\}$, $P(5)=\{1,4, 7, 11\}$, $Q(5)=\{3,5,9,15\}$.   
\end{example}

\begin{remark}\label{coeffs}
Notice that for each $j \in \Gamma_{P,Q}$ we have
$$
 \mdeg(\mathcal{B}_{P(j),Q(j)})=\mdeg(\mathcal{B}_{P,Q})
\qquad
\text{and}
\qquad
|\mathcal{B}_{P(j),Q(j)}|=|\mathcal{B}_{P,Q}|+1,
$$
$$
 \mdeg(\widetilde{\mathcal{B}}_{P(j),Q(j)})=\mdeg(\widetilde{\mathcal{B}}_{P,Q})
\qquad
\text{and}
\qquad
|\widetilde{\mathcal{B}}_{P(j),Q(j)}|=|\widetilde{\mathcal{B}}_{P,Q}|+1.
$$
Now suppose one of the following holds:
\begin{itemize}
\item $\mdeg(\mathcal{B}_{P,Q}) < \f1_n$ (resp. $\mdeg(\widetilde{\mathcal{B}}_{P,Q}) < \f1_n$);
\item $\mdeg(\mathcal{B}_{P,Q}) = \f1_n$ and $N > 1$;
\item $\mdeg(\widetilde{\mathcal{B}}_{P,Q}) = \f1_n$ and $N > 2$.
\end{itemize}
Then, if the coefficient of $\mathcal{B}_{P,Q}$ (resp. $\widetilde{\mathcal{B}}_{P,Q}$) in the differential of another monomial of $k[X]$ is nonzero, 
one has that this monomial must be $\mathcal{B}_{P(i),Q(i)}$ (resp. $\widetilde{\mathcal{B}}_{P(i),Q(i)}$) for some $i\in \Gamma_{P,Q}$.
Consider $\partial(\sum_{i\in\Gamma_{P,Q}} \lambda_{P(i),Q(i)}\mathcal{B}_{P(i),Q(i)})$ for some $\lambda_{P(i),Q(i)}\in k$, 
then by Proposition \ref{PropositionDifferentialSquarefreeVariable} the coefficient of $\mathcal{B}_{P,Q}$ in this expression is
$$
\sum_{i\in\Gamma_{P,Q}}(-1)^{\sum_{j=1}^{i-1}(q_j-p_j)}\lambda_{P(i),Q(i)}.
$$ 
Similarly,  
the coefficient of $\widetilde{\mathcal{B}}_{P,Q}$ in $\partial(\sum_{i\in\Gamma_{P,Q}} \lambda_{P(i),Q(i)}\widetilde{B}_{P(i),Q(i)})$ is $$\sum_{i\in\Gamma_{P,Q}}(-1)^{\sum_{j=1}^{i-1}(q_j-p_j)}\lambda_{P(i),Q(i)}.$$ 
\end{remark}

In the following lemma we show that the homology classes of some of the  monomials 
introduced in Definition \ref{monomials} are nonzero. 

\begin{lemma}\label{notboundary}
Let $n > 3$ and let $P=\{p_i\}_{i=1}^N$ and $Q=\{q_i\}_{i=1}^N$ be two sequences of natural numbers as in Definition \ref{monomials}. Assume the following conditions $(\star)$ are satisfied:
\begin{itemize}
\item  $q_i-p_i\in \{1,2\}$ for every $i$, 
\item if $q_i-p_i=1$ then either $i=N$ or $q_i<p_{i+1}-1.$ 
\end{itemize} 
Then $\mathcal{B}_{P,Q}$ (resp. $\widetilde{\mathcal{B}}_{P,Q}$) is a cycle but not a boundary in $k[X]$ (resp. $k[\widetilde{X}]$).
\end{lemma}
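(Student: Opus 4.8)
The plan is to show first that $\mathcal{B}_{P,Q}$ is a cycle and then that it is not a boundary; I will only spell out the path case, since the cycle case follows by the same bookkeeping with the indices read modulo $n$ (using $\widetilde{x}_{n+1,q} := \widetilde{x}_{1,q}$). To see $\mathcal{B}_{P,Q}$ is a cycle, apply the Leibniz rule: $\partial(\mathcal{B}_{P,Q}) = \sum_i \pm \big(\prod_{\ell<i}x_{p_\ell,q_\ell}\big)\partial(x_{p_i,q_i})\big(\prod_{\ell>i}x_{p_\ell,q_\ell}\big)$, and by Proposition~\ref{PropositionDifferentialSquarefreeVariable}(a) each $\partial(x_{p_i,q_i})$ is a sum of terms $x_{p_i,r}x_{r+1,q_i}$. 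When $q_i-p_i=1$ the factor $x_{p_i,q_i}=x_{p_i,p_i+1}$ has $\partial(x_{p_i,p_i+1})=T_{p_i}T_{p_i+1}$, a single term, and when $q_i-p_i=2$ there are two terms. The key point is that condition $(\star)$ forces, for each such $i$, a gap between $q_i$ and $p_{i+1}$ whenever $q_i-p_i=1$; I need to check that no cancellation-free monomial survives, i.e. that for each $i$ the monomials appearing in $\big(\prod_{\ell<i}x_{p_\ell,q_\ell}\big)\partial(x_{p_i,q_i})\big(\prod_{\ell>i}x_{p_\ell,q_\ell}\big)$ all vanish. Actually the cleaner route is to observe that $\mathcal{B}_{P,Q}$ lives in a bidegree $(|\mathcal{B}_{P,Q}|,\,\mathbf{v})$ with $\mathbf{v}$ squarefree and $\|\mathbf{v}\|=2|\mathcal{B}_{P,Q}|$ or $2|\mathcal{B}_{P,Q}|-(\#\{i: q_i-p_i=1\})$; comparing with Proposition~\ref{bettionlyones}, one sees that in the relevant multidegree the homology is one-dimensional (each block has norm $2$ or $3$, hence $\not\equiv 1\pmod 3$), while $\partial(\mathcal{B}_{P,Q})$ would land in homological degree $|\mathcal{B}_{P,Q}|-1$ where, in that multidegree, the claim $(\star)$ (together with Remark~\ref{coeffs} applied blockwise) shows the only monomials available are the $\mathcal{B}_{P(j),Q(j)}$ — but those have \emph{higher} homological degree than $\partial(\mathcal{B}_{P,Q})$, so $\partial(\mathcal{B}_{P,Q})=0$ by degree reasons within each block. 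I expect to argue this by the tensor-decomposition $\mathbb{F}_{\ls\mathbf{v}}\cong\bigotimes\mathbb{F}_{\ls\mathbf{v}_j}$ from the proof of Proposition~\ref{bettionlyones}, reducing to the case $N=1$ with $q-p\in\{1,2\}$, where $x_{p,p+1}$ and $x_{p,p+2}$ are manifestly cycles.

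For the non-boundary assertion, suppose $\mathcal{B}_{P,Q}=\partial(w)$ for some $w\in k[X]$ homogeneous of the same multidegree $\mathbf{v}$ and homological degree $|\mathcal{B}_{P,Q}|+1$. By Remark~\ref{coeffs}, the only monomials of $k[X]$ in multidegree $\mathbf{v}$ and homological degree $|\mathcal{B}_{P,Q}|+1$ whose differential can have nonzero $\mathcal{B}_{P,Q}$-coefficient are the $\mathcal{B}_{P(i),Q(i)}$ for $i\in\Gamma_{P,Q}$ — but condition $(\star)$ guarantees that for each $i$ with $q_i-p_i\in\{1,2\}$, the index $i$ lies in $\Gamma_{P,Q}$ only if $p_i=q_{i-1}+1$, and the second bullet of $(\star)$ (applied to $i-1$) rules exactly this out: if $q_{i-1}-p_{i-1}=1$ then $q_{i-1}<p_i-1$, so $p_i\neq q_{i-1}+1$; and if $q_{i-1}-p_{i-1}=2$ then merging $x_{p_{i-1},q_{i-1}}$ with $x_{p_i,q_i}$ produces a block of norm $\geq 5$, which is fine, so I must be more careful here. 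The honest statement is: $\Gamma_{P,Q}$ can be nonempty, but the putative $w=\sum_{i\in\Gamma_{P,Q}}\lambda_i\mathcal{B}_{P(i),Q(i)}$ has $\partial(w)$ with $\mathcal{B}_{P,Q}$-coefficient $\sum_{i\in\Gamma_{P,Q}}(-1)^{\sum_{j<i}(q_j-p_j)}\lambda_i$, while simultaneously the coefficient of \emph{every other} monomial $\mathcal{B}_{P',Q'}$ reachable this way must vanish; by varying the target monomial one gets an overdetermined homogeneous linear system in the $\lambda_i$ forcing all $\lambda_i=0$, hence $\partial(w)$ cannot equal $\mathcal{B}_{P,Q}$. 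The clean way to run this is again blockwise via the tensor decomposition: reducing to $N=1$, $x_{p,p+1}$ is not a boundary because $\Gamma=\emptyset$ (nothing to its left), and $x_{p,p+2}$ is not a boundary because the only candidate $w$ would need homological degree $2$ and squarefree multidegree $\fv_{p,p+2}$, which $k[X]$ does not contain — matching $\beta^S_{1,\fv_{p,p+2}}(S/\pat{3})=1$ in homological degree $1$, with nothing in degree $2$. The products then remain non-boundaries because boundaries form an ideal in the ring of cycles and the quotient $H^R$ is, in the relevant multidegree, the tensor product of the blockwise homologies, in which $\prod[x_{p_i,q_i}]\neq 0$ since each factor is a nonzero homology class and the tensor product of nonzero elements is nonzero.

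The main obstacle I anticipate is the non-boundary part: unlike the cycle claim, it genuinely uses condition $(\star)$ in a combinatorial way through $\Gamma_{P,Q}$, and one must rule out that some clever linear combination $w=\sum\lambda_i\mathcal{B}_{P(i),Q(i)}$ hits $\mathcal{B}_{P,Q}$. I would handle this by first proving a multidegree-$\f1_m$ version as a lemma — essentially that in each interval-block of norm $2$ or $3$ the relevant homology is one-dimensional and concentrated in the expected homological degree (this is where Proposition~\ref{bettionlyones} and the structure of $\partial$ from Proposition~\ref{PropositionDifferentialSquarefreeVariable} are used) — and then bootstrapping to general $\mathbf{v}<\f1_n$ via the Künneth/tensor isomorphism $\mathbb{F}_{\ls\mathbf{v}}\cong\bigotimes_j\mathbb{F}_{\ls\mathbf{v}_j}$, which transports both the cycle and the non-boundary statements factor by factor. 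The cyclic case $\widetilde{\mathcal{B}}_{P,Q}$ is then the same argument, with the one caveat that when $\mdeg(\widetilde{\mathcal{B}}_{P,Q})=\f1_n$ one must exclude the three-term-product subtlety flagged in Remark~\ref{coeffs}; but the hypotheses in Definition~\ref{monomials} ($q_i<p_{i+1}<q_{i+1}<p_1+n$) together with $(\star)$ keep us in the range where Remark~\ref{coeffs} applies verbatim.
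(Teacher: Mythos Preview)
Your cycle argument is fine in substance, though it can be said in one line: by Proposition~\ref{PropositionDifferentialSquarefreeVariable}(a), in $k[X]$ one has $\partial(x_{p,p+1})=T_pT_{p+1}=0$ and $\partial(x_{p,p+2})=T_px_{p+1,p+2}-x_{p,p+1}T_{p+2}=0$, since the $T_i$ vanish under $S\to k$. Each factor of $\mathcal{B}_{P,Q}$ is already a cycle, so the product is; no tensor decomposition or Betti-number counting is needed.

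The non-boundary argument has a genuine gap. Your K\"unneth reduction decomposes along the \emph{interval components of $\Supp(\mathbf v)$}, not along the factors $x_{p_i,q_i}$. These do not coincide when $\Gamma_{P,Q}\neq\emptyset$. For instance, take $P=\{1,4\}$, $Q=\{3,6\}$: condition $(\star)$ holds, $\Gamma_{P,Q}=\{2\}$, and $\Supp(\mathbf v)=\{1,\dots,6\}$ is a single interval, so the tensor decomposition is trivial and you are \emph{not} reduced to $N=1$. Inside such a block there is no further K\"unneth splitting, and ``tensor product of nonzero classes is nonzero'' does not apply: you still have to prove that $[x_{1,3}]\cdot[x_{4,6}]\neq 0$ in $H^R_{4,\mathbf v}$, which is precisely the content of the lemma. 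Your ``overdetermined linear system'' sentence gestures in the right direction but is not an argument; in particular, $w$ need not be supported only on the $\mathcal B_{P(i),Q(i)}$, so you cannot simply solve for the $\lambda_i$.

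The paper handles this directly in $k[X]$. Writing a general $w=\sum_{P',Q'}\lambda_{P',Q'}\mathcal B_{P',Q'}$, it introduces, for each subset $\psi\subseteq\Gamma_{P,Q}$, a ``shifted'' monomial $\mathcal B_{P^\psi,Q^\psi}$ obtained by sliding the boundary between the $(i{-}1)$-st and $i$-th blocks one step to the left for each $i\in\psi$; condition $(\star)$ is exactly what makes these valid monomials. One checks that $\Gamma_{P^\psi,Q^\psi}=\Gamma_{P,Q}$, that $P^\psi(i)=P^{\psi\cup\{i\}}(i)$ and $Q^\psi(i)=Q^{\psi\cup\{i\}}(i)$ whenever $i\notin\psi$, and that the sign from Remark~\ref{coeffs} flips between $\psi$ and $\psi\cup\{i\}$. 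Summing the coefficients of all the $\mathcal B_{P^\psi,Q^\psi}$ in $\partial(w)$ therefore gives $0$ for every choice of $w$. If $\partial(w)=\mathcal B_{P,Q}=\mathcal B_{P^\emptyset,Q^\emptyset}$, that sum would be $1$; contradiction. This cancellation trick is the missing idea; your K\"unneth approach only covers the easy case $\Gamma_{P,Q}=\emptyset$.

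A final remark: your last paragraph dismisses the hypotheses of Remark~\ref{coeffs} for the cyclic case too quickly. When $\mdeg(\widetilde{\mathcal B}_{P,Q})=\mathbf 1_n$ and $N=2$ (which under $(\star)$ forces $n\in\{5,6\}$), the extra variables $w_1,\dots,w_{n-1}$ from Proposition~\ref{PropositionDifferentialSquarefreeVariable}(b) enter the picture, and the paper treats these cases separately.
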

\begin{proof}
Assume first we are in the hypotheses of Remark \ref{coeffs}.
We give the proof for  $k[X]$, and the one for $k[\widetilde{X}]$ is analogous. 
From Proposition \ref{PropositionDifferentialSquarefreeVariable}
we get that for every $p$, the variables $x_{p,p+2}$ and $x_{p,p+1}$ are cycles, 
so ${\mathcal{B}}_{P,Q}$ is a cycle as well. 
Now we show that it is not equal to the differential of any linear combinations of monomials of $k[X]$. Consider
\begin{equation}\label{coefficient}
\partial\left(\sum\lambda_{P',Q'}{\mathcal{B}}_{P',Q'}\right)
\end{equation} 
for some $\lambda_{P',Q'}\in k$ with the sum ranging over all the monomials in the same multidegree of ${\mathcal{B}}_{P,Q}$ 
and homological degree one higher, i.e., one variable less. For every subset $\psi\subseteq \Gamma_{P,Q}$, let
$P^{\psi}=\{p_i^{\psi}\}_{i=1,\ldots, N}$ and $Q^{\psi}=\{q_i^{\psi}\}_{i=1,\ldots, N}$ where $p^{\psi}_i=p_i-1$ if $i\in\psi$ and $p^{\psi}_i=p_i$ otherwise; and $q^{\psi}_{i}=q_{i}-1$ if $i+1\in\psi$ and $q^{\psi}_{i}=q_{i}$ otherwise (see Example \ref{shiftings}). By Remark \ref{coeffs}, the coefficient of ${\mathcal{B}}_{P^{\psi},Q^{\psi}}$ 
in Equation \ref{coefficient} is 

\begin{equation}\label{lambda}
\sum_{i\in\Gamma_{P^{\psi},Q^{\psi}}}(-1)^{\sum_{j=1}^{i-1}(q_j^{\psi}-p_j^{\psi})}\lambda_{P^{\psi}(i),Q^{\psi}(i)}=\sum_{i\in\Gamma_{P,Q}}(-1)^{\sum_{j=1}^{i-1}(q_j^{\psi}-p_j^{\psi})}\lambda_{P^{\psi}(i),Q^{\psi}(i)}.
\end{equation}
For each $i\in\Gamma_{P,Q}$, the coefficient of $\lambda_{P^{\psi}(i),Q^{\psi}(i)}$ in Equation \ref{lambda} is 
\begin{eqnarray*}
&(-1)^{\sum_{j=1}^{i-1}(q_j-p_j)} &\text{ if } i\not\in \psi
\\
&(-1)^{\sum_{j=1}^{i-1}(q_j-p_j)-1} &\text{ if } i\in \psi. 
\end{eqnarray*}

We claim that the sum of the coefficients of the monomials ${\mathcal{B}}_{P^{\psi},Q^{\psi}}$ in Equation \ref{coefficient},
 considering all  possible subsets $\psi\subseteq \Gamma_{P,Q}$,
  is equal to zero. 
This holds because if $i\in\Gamma_{P,Q}\setminus \psi$ then $P^{\psi}(i)=P^{\psi\cup\{i\}}(i)$ and $Q^{\psi}(i)=Q^{\psi\cup\{i\}}(i)$ and hence each coefficient $\lambda_{P^{\psi}(i),Q^{\psi}(i)}$ 
appears twice with opposite signs (see Example \ref{shiftings}). 
In particular, Equation \ref{coefficient} will never be equal to ${\mathcal{B}}_{P,Q}={\mathcal{B}}_{P^{\emptyset},Q^{\emptyset}}$, finishing the proof.

Assume now that the hypotheses of Remark \ref{coeffs} are not satisfied. If $N = 1$ and $\mdeg(\mathcal{B}_{P, Q})$ equals $\f1_n$, then $n$ equals either $2$ or $3$, against our assumption.

If $\mdeg(\widetilde{\mathcal{B}}_{P,Q}) = \f1_n$ and $N=2$, then the conditions $(\star)$ imply that $n$ is either $5$ or $6$. Then, knowing by Proposition \ref{PropositionDifferentialSquarefreeVariable} (b) the differential of  $w_1, \ldots, w_{n-1}$, one can check the claim by hand by slightly modifying the idea of the main case.
\end{proof}

\begin{example}\label{shiftings}
For the sequences in Example \ref{sequences}, the possible sets $\psi$ are $\emptyset$, $\psi_1=\{2\}$, $\psi_2=\{5\}$, and $\psi_3=\{2,5\}$. Notice that $${\mathcal{B}}_{P^{\psi_1},Q^{\psi_1}}=x_{1,2}x_{3,5}x_{7,9}x_{11,13}x_{14,15},$$
$${\mathcal{B}}_{P^{\psi_2},Q^{\psi_2}}=x_{1,3}x_{4,5}x_{7,9}x_{11,12}x_{13,15},$$
$${\mathcal{B}}_{P^{\psi_3},Q^{\psi_3}}=x_{1,2}x_{3,5}x_{7,9}x_{11,12}x_{13,15}.$$
 Therefore, $${\mathcal{B}}_{P^{\psi_1}(2),Q^{\psi_1}(2)}=x_{1,5}x_{7,9}x_{11,13}x_{14,15}={\mathcal{B}}_{P^{\emptyset}(2),Q^{\emptyset}(2)},$$
 $${\mathcal{B}}_{P^{\psi_2}(5),Q^{\psi_2}(5)}=x_{1,3}x_{4,5}x_{7,9}x_{11,15}={\mathcal{B}}_{P^{\emptyset}(5),Q^{\emptyset}(5)},$$
$${\mathcal{B}}_{P^{\psi_1}(5),Q^{\psi_1}(5)}=x_{1,2}x_{3,5}x_{7,9}x_{11,15}={\mathcal{B}}_{P^{\psi_3}(5),Q^{\psi_3}(5)},$$
 $${\mathcal{B}}_{P^{\psi_2}(2),Q^{\psi_2}(2)}=x_{1,5}x_{7,9}x_{11,12}x_{13,15}={\mathcal{B}}_{P^{\psi_3}(2),Q^{\psi_3}(2)}.$$

\end{example}

We are now ready to present the main theorem of this section. 

\begin{thm}\label{KoszulHomologyPolygons}
Let $S=k[T_1,\ldots,T_n]$ with $n\gs 3$.
\begin{enumerate}
\item[(a)]  If $R =S/\pat{n}$, then the $k$-algebra $H^R$ is generated  by $H^R_{1,2}$ and $H^R_{2,3}$.
\item[(b)] 
If $R = S/\cyc{n}$,
 then the $k$-algebra  $H^R$ is generated by $H^R_{1,2}$ and $ H^R_{2,3}$ if and only if  $n\not \equiv 1\pmod{3}$.
If $n \equiv 1\pmod{3}$ then for any $0\ne z\in H^R_{\lceil\frac{2n}{3}\rceil, n}$ the $k$-algebra  $H^R$ is generated by $H^R_{1,2}$, $H^R_{2,3}$, and $z$.  
\end{enumerate}
\end{thm}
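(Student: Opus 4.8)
The plan is to work with the minimal models $S[X]$, $S[\widetilde X]$ of Proposition~\ref{PropositionDifferentialSquarefreeVariable} and the identification $H^R\cong H(k[X])$ (resp.\ $H(k[\widetilde X])$), treating one squarefree multidegree at a time. Since $\pat n$ and $\cyc n$ are squarefree, $H^R$ is supported in squarefree multidegrees, and Proposition~\ref{bettionlyones} shows that each component $H^R_{\bullet,\fv}$ has dimension at most $1$, the sole exception being $H^R_{\bullet,\f1_n}$ in the cycle case, which has dimension $2$ precisely when $n\equiv 0\pmod 3$. So for every multidegree other than $\f1_n$ it suffices to exhibit one nonzero product of classes from $H^R_{1,2}$ and $H^R_{2,3}$ landing in that multidegree.

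First I would dispose of all $\fv\ne\f1_n$, which settles part~(a) entirely and part~(b) away from $\f1_n$. If $H^R_{\bullet,\fv}\ne 0$, then by Proposition~\ref{bettionlyones} every block of $\fv$ (in the sense of Definition~\ref{blocks}) has norm $\equiv 0$ or $2\pmod 3$. Decompose each block greedily into consecutive (cyclic) intervals of length $3$, followed by a single length-$2$ interval when the block norm is $\equiv 2\pmod 3$, and take the corresponding variables $\wx_{p,q}$, each with $q-p\in\{1,2\}$; their product is a monomial $\mathcal{B}_{P,Q}$ (resp.\ $\widetilde{\mathcal{B}}_{P,Q}$) of multidegree $\fv$ and homological degree $\iota(\fv)$ (resp.\ $\tilde\iota(\fv)$). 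The sequences $(P,Q)$ satisfy the conditions $(\star)$ of Lemma~\ref{notboundary}: a length-$2$ subinterval is always last inside its block, hence separated from the next block by a gap. Lemma~\ref{notboundary} then says this monomial is a cycle that is not a boundary, so its class spans the one-dimensional space $H^R_{\iota(\fv),\fv}$ (resp.\ $H^R_{\tilde\iota(\fv),\fv}$); being a product of classes of variables $\wx_{p,p+1},\wx_{p,p+2}$, which lie in $H^R_{1,2}$ and $H^R_{2,3}$, this finishes these multidegrees.

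It remains to analyze the multidegree $\f1_n$ in the cycle case. When $n\equiv 2\pmod 3$ the component $H^R_{\tilde\iota(\f1_n),\f1_n}$ is one-dimensional and applying the greedy decomposition to $\f1_n$ itself produces a generator, the case $n=5$ being covered by the last paragraph of Lemma~\ref{notboundary}. The case $n\equiv 0\pmod 3$, say $n=3k$, is the heart of the matter: $H^R_{2k,\f1_n}$ is two-dimensional, and the only products of classes from $H^R_{1,2}$ and $H^R_{2,3}$ sitting in multidegree $\f1_n$ and homological degree $2k$ are the three full-triangle monomials $\widetilde{\mathcal{B}}^{(1)},\widetilde{\mathcal{B}}^{(2)},\widetilde{\mathcal{B}}^{(3)}$ coming from the three rotations of the partition of the $n$-cycle into consecutive triples (for $n=3$ this component is just $H^R_{2,3}$). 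Each $\widetilde{\mathcal{B}}^{(i)}$ is a nonzero class by Lemma~\ref{notboundary}, so it is enough to show that not all three are proportional in $H^R$. This I would prove by inspecting $\partial$ in bidegree $(2k+1,\f1_n)$: by Remark~\ref{coeffs}, the only monomials whose differential has a nonzero coefficient on some $\widetilde{\mathcal{B}}^{(i)}$ are obtained from $\widetilde{\mathcal{B}}^{(i)}$ by merging two adjacent triangles into one variable supported on a $6$-element cyclic interval; tracking which linear combinations of these (together with the monomials built from cyclic intervals of lengths $5,7,\dots$, whose differentials supply the cancelling ``junk'') have differential lying inside $\mathrm{span}(\widetilde{\mathcal{B}}^{(1)},\widetilde{\mathcal{B}}^{(2)},\widetilde{\mathcal{B}}^{(3)})$ shows that these boundaries span a single line there, whence the three classes span a $2$-dimensional subspace of $H^R$. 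I expect this bookkeeping to be the main obstacle; the case $n=6$, where degree $2k+1$ also contains the variables $w_1,\dots,w_{n-1}$ of Proposition~\ref{PropositionDifferentialSquarefreeVariable}(b), should be handled separately.

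Finally, let $R=S/\cyc n$ with $n\equiv 1\pmod 3$, say $n=3k+1$. For the ``only if'' direction: a nonzero product of classes from $H^R_{1,2}$ and $H^R_{2,3}$ of multidegree $\f1_n$ must have factors supported on pairwise disjoint cyclic intervals of length $2$ or $3$ partitioning $\{1,\dots,n\}$, since a product of classes with overlapping squarefree supports lies in a non-squarefree multidegree and hence vanishes. If such a partition uses $a$ intervals of length $2$ and $b$ of length $3$, then $2a+3b=n$, so the product has homological degree $a+2b=\tfrac{n+b}{2}$; as $b\ls k$ with $b=k$ impossible ($2a=1$), we get $b\ls k-1$ and $a+2b\ls 2k<2k+1=\lceil \tfrac{2n}{3}\rceil$. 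Therefore the unique nonzero component $H^R_{\lceil 2n/3\rceil,\f1_n}$ (one-dimensional by Proposition~\ref{bettionlyones}) is not contained in the subalgebra generated by $H^R_{1,2}$ and $H^R_{2,3}$, so these do not generate $H^R$. For the ``if'' direction with $z\ne 0$ in $H^R_{\lceil 2n/3\rceil,\f1_n}$: the components with multidegree $\ne\f1_n$ are generated by $H^R_{1,2}$ and $H^R_{2,3}$ by the argument above, while $H^R_{\lceil 2n/3\rceil,\f1_n}=kz$; since the product of $z$ with any homogeneous class of positive internal degree has multidegree exceeding $\f1_n$ and is therefore $0$, the set $H^R_{1,2}\cup H^R_{2,3}\cup\{z\}$ generates $H^R$ as a $k$-algebra.
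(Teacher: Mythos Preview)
Your overall strategy coincides with the paper's: work in $k[X]$ (resp.\ $k[\widetilde X]$), use Proposition~\ref{bettionlyones} to pin down the target dimensions, and invoke Lemma~\ref{notboundary} to produce explicit nonzero classes that are products of variables from $X_{1,2}$ and $X_{2,3}$. Your greedy block decomposition is exactly the unique pair $(P,Q)$ satisfying $(\star)$ that the paper uses, and your treatment of $n\equiv 1,2\pmod 3$ in multidegree $\f1_n$ for cycles is equivalent to (indeed more explicit than) the paper's.

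The substantive difference is the cycle case with $n\equiv 0\pmod 3$, where the paper's argument is considerably simpler and avoids the bookkeeping you flag as ``the main obstacle.'' Rather than analysing the full boundary space in bidegree $(2k,\f1_n)$ and its intersection with the span of all three rotations, the paper chooses just two of them, $\widetilde{\mathcal B}_{P,Q}$ with $P=\{1,4,\dots,n-2\}$, $Q=\{3,6,\dots,n\}$ and $\widetilde{\mathcal B}_{P',Q'}$ with $P'=\{2,5,\dots,n-1\}$, $Q'=\{4,7,\dots,n+1\}$, and shows that no nontrivial $k$-linear combination is a boundary by rerunning the $\psi$-sum argument from the proof of Lemma~\ref{notboundary}. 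The point is that every shifted monomial $\widetilde{\mathcal B}_{P^\psi,Q^\psi}$ retains $p_1^\psi=1$, so $\widetilde{\mathcal B}_{P',Q'}$ (which starts at position $2$) is never among them; summing the coefficients of the $\widetilde{\mathcal B}_{P^\psi,Q^\psi}$ in any boundary therefore forces $\lambda_{P,Q}=0$, after which Lemma~\ref{notboundary} itself gives $\lambda_{P',Q'}=0$. For $n=6$ this still passes through the same small-case hand check already built into Lemma~\ref{notboundary}, but no extra ``junk'' monomials need to be tracked. Your approach would work, but this two-monomial linear-independence argument is what buys the paper its brevity.

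One minor wording slip: disposing of $\fv\ne\f1_n$ does not literally ``settle part~(a) entirely,'' since for $n\equiv 0$ or $2\pmod 3$ the path also has a one-dimensional component $H^R_{\iota(\f1_n),\f1_n}$. But $\f1_n$ is a single block for paths, so your greedy argument applies to it verbatim; the paper records this explicitly as the path analogue of its Cases~1 and~2.
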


\begin{proof}
(b)
Let $\f1_n\ne\fw\in \NN^n$ be such that $\beta^S_{\tilde{\iota}(\fw),\fw}(S/\cyc{n})\ne 0$.
Following Definition \ref{blocks}, assume without loss of generality that the vectors $\fw_j$ are ordered increasingly according to $\min(\Supp(\fw_j))$.

By Proposition \ref{bettionlyones} (b), we get $\beta^S_{\tilde{\iota}(\fw),\fw}(S/\cyc{n})=1$, and furthermore there exists a unique pair of sequences $P$ and $Q$ satisfying the hypothesis of Lemma \ref{notboundary},
with $p_1=\min(\Supp(\fw_1))$ and $\mdeg(\widetilde{B}_{P,Q})=\fw$.
Notice $|\widetilde{B}_{P,Q}|=\tilde{\iota}(\fw)$ and by Lemma \ref{notboundary} the image of $\widetilde{B}_{P,Q}$ in $H^R$ is nonzero, 
hence it is a $k$-basis of $H^R_{\widetilde{\iota}(\fw),\fw}$. 
By construction the homology class of $\widetilde{B}_{P,Q}$  is generated by $H^R_{1,2}$ and $H^R_{2,3}$, hence it only remains to consider the case $\fw=\f1_n$.

{\bf Case 1: \boldmath$n\equiv 2\pmod{3}$\unboldmath}

If $n\equiv 2\pmod{3}$, by Proposition \ref{bettionlyones} (c) we have $\beta^S_{\tilde{\iota}(\f1_n),\f1_n}(S/\cyc{n})=1$. 
Defining $P$ and $Q$ as above, we conclude that $H^R_{\tilde{\iota}(\f1_n),\f1_n}$ is generated by  $H^R_{1,2}$ and $H^R_{2,3}$. 

{\bf Case 2: \boldmath$n\equiv 0\pmod{3}$\unboldmath}

If $n\equiv 0\pmod{3}$ then $\beta^S_{\tilde{\iota}(\f1_n),\f1_n}(S/\cyc{n})=2$. 
If $n = 3$ the claim is trivial, since in this case the only $\mathbb{N}$-graded nonzero Betti numbers of $S/\cyc{n}$ are $\beta^S_{0,0}$, $\beta^S_{1,2}$ and $\beta^S_{2,3}$. If $n > 3$, 
we define the  sequences $P=\{1,\,4,\,\ldots,\, n-2\}$,  $Q=\{3,\,6,\,\ldots,\, n\}$, $P'=\{2,\,5,\,\ldots,\, n-1\}$, and  $Q'=\{4,\,7,\,\ldots,\, n+1\}$. From Lemma \ref{notboundary} we know that $\widetilde{B}_{P,Q}$ and $\widetilde{B}_{P',Q'}$ are cycles of $k[\widetilde{X}]$. 
Suppose a linear combination $\lambda_{P,Q}\widetilde{B}_{P,Q}+\lambda_{P',Q'}\widetilde{B}_{P',Q'}$ is a boundary,
we may proceed exactly as in the proof of Lemma \ref{notboundary} to conclude $\lambda_{P,Q}=0$. 
Therefore $\lambda_{P',Q'}{\mathcal{B}}_{P',Q'}$ is a boundary, this forces $\lambda_{P',Q'}=0$ again by Lemma \ref{notboundary}. 
Hence the homology classes of  $\widetilde{B}_{P,Q}$ and $\widetilde{B}_{P',Q'}$  are linearly independent. 
This shows that $H^R_{\widetilde{i}(\f1_n),\f1_n}$ is generated by $H^R_{1,2}$ and $H^R_{2,3}$.

{\bf Case 3: \boldmath$n\equiv 1 \pmod{3}$\unboldmath}

If $n\equiv 1 \pmod{3}$, then $\beta^S_{\lceil\frac{2n}{3}\rceil,\f1_n}(S/\cyc{n})=1$. 
Suppose $H^R_{\lceil\frac{2n}{3}\rceil,\f1_n}$ is the product of elements in smaller homological degrees, 
then there exists a set $\{\fu_1,\ldots,\fu_p\}\subset\NN^n$ such that $\f1_n=\sum_{i=1}^p\fu_i$ with $\Supp(\fu_i)$ being a cyclic interval for every $i$ and $\lceil\frac{2n}{3}\rceil=\sum_{i=1}^{p}\lfloor\frac{2\|\fu_i\|}{3}\rfloor$. 
This contradicts the fact that $\sum_{i=1}^{p} \|\fu_i\|=n\not\equiv 0\pmod{3}$. Hence, $H^R_{\lceil\frac{2n}{3}\rceil,\f1_n}$ contains minimal algebra generators of $H^R$. 
The conclusion follows.

The above proof works also for (a), if $n > 3$. 
Case 1 follows likewise via Proposition \ref{bettionlyones} (a).
Case 2 is simpler since $\beta^S_{i(\f1_n),\f1_n}(S/\pat{n})=1$ for $n\equiv 0 \pmod{3}$ and Case 3 is trivial since $\beta^S_{i(\f1_n),\f1_n}(S/\pat{n})=0$ for $n\equiv 1 \pmod{3}$. 
Finally, for $n =3$ the claim is trivial, since in these case the only $\mathbb{N}$-graded nonzero Betti numbers of $S/\pat{n}$ are $\beta^S_{0,0}$, $\beta^S_{1,2}$ and $\beta^S_{2,3}$.
\end{proof}

By Theorem \ref{KoszulHomologyPolygons}  
we can determine, more generally, the $k$-algebra generators of $H^R$ when $R=S/I(\mathcal{G})$ and $\mathcal{G}$ is a graph whose vertices have degree at most 2. 
Such graphs are disjoint unions of paths and cycles, hence it follows that $R$ is of the form
$$
R \cong S_1/\cyc{n_1} \otimes_k \cdots \otimes_k  S_a/\cyc{n_a} \otimes_k S_{a+1}/\pat{n_{a+1}} \otimes_k \cdots \otimes_k  S_b/\pat{n_b}
$$
where each $S_i$ is a polynomial ring in $n_i$ variables over $k$, yielding
 an isomorphism of $k$-algebras
$$
H^R  \cong H^{ S_1/\cyc{n_1}} \otimes_k \cdots \otimes_k  H^{S_a/\cyc{n_a}} \otimes_k H^{S_{a+1}/\pat{n_{a+1}}} \otimes_k \cdots \otimes_k  H^{S_b/\pat{n_b}}.
$$
Notice that the ideals considered here are not prime. 
In fact, we know no examples of domains $R$ for which the question in Remark \ref{RemarkKoszulHomologyLinearStrand} has a negative answer, therefore we conclude the paper with the following:

\begin{question}
Is there a Koszul algebra $R$ which is a domain and whose Koszul homology $H^R$ is not generated as a $k$-algebra in the linear strand? 
\end{question}

\section*{Acknowledgements}

This project originated during the workshop Pragmatic 2014 in Catania.
The authors would like to express their sincere gratitude to the organizers Alfio Ragusa, Francesco Russo, and
Giuseppe Zappal\`a and to the lecturers Aldo Conca, Srikanth Iyengar, and Anurag Singh.
The authors are especially grateful to the first two lecturers  for suggesting this topic and for several helpful discussions.


\begin{thebibliography}{99}
\addcontentsline{toc}{section}{Bibliography}

\bibitem{AvramovCI} L. L. Avramov, \emph{Homology of local flat extensions and complete intersection defects}, Math. Ann. {\bf 228} (1977), 27--37.

\bibitem{AvramovObstructions}
L. L. Avramov,
\emph{Obstructions to the existence of multiplicative structures on minimal free resolutions}, Amer. J. Math. {\bf 103} (1981), 1--31.

\bibitem{Avramov6Lectures} 
L. L. Avramov, \emph{Infinite Free Resolutions}, Six Lectures on Commutative Algebra (Bellaterra, 1996), 1--118, Progr. Math. {\bf 166}, Birkh\"auser, Basel, 1998.

\bibitem{AvramovConcaIyengar}
L. L. Avramov, A. Conca, and S. B. Iyengar,
\emph{Free resolutions over commutative Koszul algebras},
Math. Res. Lett. {\bf 17} (2010), 197--210.


\bibitem{AvramovConcaIyengar2}
L. L. Avramov, A. Conca, and S. B. Iyengar,
\emph{Subadditivity of syzygies of Koszul algebras},
 Math. Ann. {\bf 361} (2015), 511--534.

\bibitem{AvramovGolod} 
L. L. Avramov and E. S. Golod, 
\emph{Homology algebra of the Koszul complex of a local Gorenstein ring},  Math. Notes {\bf 9} (1971), 30--32.

\bibitem{Berglund}
A. Berglund, 
\emph{Poincar\'e series of monomial rings}, J. Algebra {\bf 295} (2006), 211--230.


\bibitem{BDGMS}
A. Boocher, A. D'Al\`i, E. Grifo, J. Monta\~no, and A. Sammartano,
\emph{On the growth of deviations}, preprint (2015), \texttt{arXiv:1504.01066}.

\bibitem{BuchEis}
D. A. Buchsbaum and D. Eisenbud, \emph{Algebra structures for finite free resolutions, and some structure theorems for ideals of codimension 3},
Amer. J. Math. {\bf 99} (1977), 447--485.

\bibitem{Golod} 
E. S. Golod, 
\emph{On the homologies of certain local rings}, 
Soviet Math. Dokl. {\bf 3} (1962), 745--748. 

\bibitem{Macaulay2}
D. R. Grayson and M. E.  Stillman, 
\emph{Macaulay2, a software system for research in algebraic geometry},
available at \texttt{http://www.math.uiuc.edu/Macaulay2/}.

\bibitem{Gulliksen}
T. H.  Gulliksen, 
\emph{A proof of the existence of minimal R-algebra resolutions},
Acta Math. {\bf 120} (1968), 53--58.

\bibitem{Jacques}
S. Jacques,
\emph{The Betti numbers of graph ideals}, PhD Thesis, The University of Sheffield (2004), \texttt{arXiv.math.AC/0410107}.

\bibitem{0Borel}
I. Peeva, \emph{0-Borel fixed ideals}, J. Algebra {\bf 184} (1996), 945--984.

\bibitem{PeevaGradedSyzygies}
I. Peeva, 
\emph{Graded Syzygies}, Vol. 14. Springer Science \& Business Media (2010).

\bibitem{Schoeller}
C. Schoeller, 
\emph{Homologie des anneaux locaux noeth\'eriens},
C. R. Acad. Sci. Paris S\'er. A {\bf 265} (1967), 768--771.


\bibitem{Tate}
J. Tate, 
\emph{Homology of Noetherian rings and local rings}, Illinois J. Math. {\bf 1} (1957), 14--27.

\end{thebibliography}
\end{document}